\title{Spin geometry of the
rational noncommutative torus}
\author{~\\
Alessandro Carotenuto and
Ludwik D\k abrowski\\[10pt]
\textit{\small Scuola Internazionale Superiore di Studi Avanzati (SISSA)}\\{\small  via Bonomea 265, I-34136 Trieste, Italy}}
\begin{document}
\maketitle
%\tableofcontents
\newtheorem{ev}{Everything}[section]
\newtheorem{exa}[ev]{Example}
\newtheorem{mydef}[ev]{Definition}
\newtheorem{theorem}[ev]{Theorem}
\newtheorem{lemma}[ev]{Lemma}
\newtheorem{prop}[ev]{Proposition}

\def\C{{\mathbb C}}
\def\N{{\mathbb N}}
\def\R{{\mathbb R}}
\def\Z{{\mathbb Z}}
\def\z2{{\mathbb Z}/2}
\def\T{{\mathbb T}}
\newcommand{\D}{\slashed{D}}
%\newcommand{\mapsfrom}{\mathrel{\reflectbox{\ensuremath{\map‌​sto}}}}

%\usepackage{hyperref}
%\usepackage{utopia}
%\usepackage[utopia]{mathdesign}

%LD
\newcommand{\ver}[1]{{\color{green}#1}} %TMP
\newcommand{\blu}[1]{{\color{blue}#1}} %TMP
\newcommand{\red}[1]{{\color{red}#1}} %TMP
\newcommand{\segna}[1]{\rosso{$\overline{\rule{0pt}{1pt}\smash[t]{\text{\color{black}#1}}}$}}
\newcommand{\half}{\frac{1}{2}}

\newcommand{\Tb}{\mathbb{T}}         %% torus
\newcommand{\bZ}{\mathbb{Z}}         %% integers
\newcommand{\bN}{\mathbb{N}}         %% natural numbers
\newcommand{\bR}{\mathbb{R}}         %% real numbers
\newcommand{\bC}{\mathbb{C}}         %% complex numbers
\newcommand{\te}{\theta}
\newcommand{\cO}{\mathcal{O}}        %% an algebra
\newcommand{\Dslash}{{D\mkern-11.5mu/\,}} %% Dirac operator
\def\vol{{\mathrm{vol}}}

%=============================

\begin{abstract}\noindent
The twined almost commutative structure of
the standard spectral triple on the noncommutative torus with rational parameter is exhibited, by showing isomorphisms with a spectral triple on the algebra of sections of certain bundle of algebras, and a spectral triple on a certain invariant subalgebra of the product algebra.
These isomorphisms intertwine also the grading and real structure.
This holds for all four inequivalent spin structures,
which are explicitly constructed in terms of double coverings of the noncommutative torus (with arbitrary real parameter).
These results are extended also to a class of curved (non flat) spectral triples,
obtained as a perturbation of the standard one by eight  central elements.
\end{abstract}

\section{Introduction}
The framework of Connes' noncommutative spectral geometry provides a generalisation of ordinary
Riemannian spin manifolds to noncommutative manifolds.
Within this framework, the special case
of a (globally trivial) almost-commutative manifold has been shown to describe a (classical) gauge
theory over a Riemannian spin manifold (see e.g. \cite{vS}), which ultimately led to a description of the full Standard
Model of high energy physics, including the Higgs mechanism and neutrino mixing (\cite{vS},\cite{C1},\cite{C2},\cite{C4},\cite{CM},\cite{DLM}).
The gauge theories mentioned above are, by construction, topologically trivial, in the sense that
the corresponding principal bundles are globally trivial bundles.
A generalization to nontrivial bundles has
been obtained in (\cite{koen}) for the special case of Yang-Mills theory.

In this paper we approach the nontriviality issues
in another way,
 without employing connections on principal bundles.
We deal with a {\em twined almost commutative geometry}
directly on the Riemannian spin level of spectral geometry.
The classical situation we have in mind is
the quotient $(M\times N)/G$ of the product of two compact spin $G$-manifolds $M$ and $N$ by the diagonal action of a Lie group $G$, so that it forms a
locally trivial bundle over
$M/G$ with a fiber $N$.
More specifically we think of coverings of manifolds
and in particular finite coverings.
Of course in all that it should be assumed that the bundle projection is a Riemannian submersion.

As it is customary in noncommutative geometry,
we work with the dual formulation, and restrict to the case when $M$ is classical while $N$
is a quantum space described by a
star algebra $A$. Namely, we build a spectral triple
from the canonical spectral triple
on a spin $G$-manifold $M$ and a noncommutative spectral triple
that describes the geometry of a quantum $G$-space $N$.
We call this construct {\em twined} when  is not globally trivial, i.e. not a product one.
We prefer the adjective {\em twined} rather then
{\em twisted}, that is often used for spectral triples to mean twisted commutators, twisted real structure, or a coupling to certain differential 3-form.

A particular case we focus on is a twined almost commutative spectral triple, when the noncommutative one is finite.
Then a suitable subalgebra of the
product of the algebras of the two spectral triples above can be regarded as the algebra of sections of a locally trivial
finite dimensional algebra bundle over $M/G$.
This is known to happen for example for the noncommutative torus with rational parameter (\cite{bondia}),
for which we provide in Subsection \ref{First isomorphic spectral triple} all  the other data of the spectral triple,
namely the Hilbert space and the analogue of the Dirac operator,
as well as grading and real structure.

We elaborate also in Subsection \ref{Second isomorphic spectral triple} a closely related third description in which the algebra is realized as a suitable subalgebra of the tensor product of two algebras.
It is suited
also for infinite dimensional algebras and overcomes the requirement that in the second description vector bundles need to be of finite rank.
This will allow us to consider in a future work also e.g. the theta deformation of the canonical spectral triple \cite{CDV} (see also \cite{CL}), as the twined product of the canonical spectral triple on the quotient manifold by a torus of isometries, with the quantum torus.

In Section \ref{Inequivalent spin structures} we describe the four inequivalent spin structures on the noncommutative torus
with arbitrary real value of the deformation parameter $\theta$ in terms of inequivalent double coverings.
We construct then the corresponding standard (flat) spectral triples, which are mutually non-isomorphic already on the level of algebras.
They should be compared with those constructed in \cite{PS} in terms of spectral triples, but
the precise correspondence may be quite involved due to rather different classification criteria of inequivalent spin structures.
In addition note that in contrast with \cite{PS}, though following the same scheme, it is claimed in \cite{V13} that for rational parameter $\theta$ the number of inequivalent spin structures can be less than four.

In subsection \ref{Isomorphic spectral triples} we indeed
analyse the case of rational parameter $\theta$ and provide two isomorphic descriptions of spectral triples for all four spin structures in analogy to the case of trivial spin structure presented in Subsections
\ref{First isomorphic spectral triple} and \ref{Second isomorphic spectral triple}.

Finally, in Section \ref{Curved rational noncommutative torus}
we discuss the three isomorphic descriptions of spectral triples on the curved
(i.e.\,non flat) rational noncommutative torus. After presenting two simple examples of perturbations of the standard Dirac operator we then show that a wide class
of perturbations by means of elements of the algebra preserves the twined almost commutative structure if and only if these elements are central.

It will be interesting to investigate in future works possible relations of our spectral geometry approach with the approach of \cite{koen} which make use of connections on principal bundles,
and also with \cite{bcr16}, where spectral triples on the noncommutative torus have been employed to study topological insulators.
Another direction to be explored regards the noncommutative coverings
and in particular spectral triples on self-coverings of the rational
noncommutative torus in \cite{agi}, and the quantum coverings
studied in \cite{Canlubo} via Hopf algebroids as the noncommutative analogue of groupoids.

\section{Preliminaries}
%=================\\

Geometry of a compact Riemannian spin manifold $M$ can be encoded (\cite{C1},\cite{bondia}) in terms of its canonical spectral triple, which consists of the algebra of smooth complex functions on $M$, the Hilbert space $L^2(M,\Sigma)$ of square integrable Dirac spinors on $M$ and the Dirac operator on $M$. More generally, {\em spectral triple} is the datum $(A_,\mathcal{H}, D)$ of a unital $*$-algebra $A$, a Hilbert space $\mathcal{H}$ carrying a faithful unitary representation $\pi:A\to B(\mathcal{H})$, and a selfadjoint operator $D$ on $\mathcal{H}$ with compact resolvent, such that the commutators $[D,\pi(a)]$ are bounded operators for any $a\in A$. \\
A spectral triple is called {\em even} if there is a $\Z/2$-grading operator $\chi$ commuting with $\pi(a)$ for any $a\in A$ and anticommuting with $D.$
Furthermore it is called {\em real} if there is a
$\C$-antiunitary operator $J,$ such that
$[a,JbJ]=0$ for $a,b\in A$, $J^2=\epsilon $,
$JD=\epsilon'DJ$ and $J\chi=\epsilon''\chi J$.
The  three signs $\epsilon, \epsilon', \epsilon''$ determine the so called KO-dimension of the spectral triple.

\begin{mydef}\label{diags}
We call two spectral triples $(A_1,\mathcal{H}_{1}, D_1)$ and $(A_2,\mathcal{H}_2, D_2)$ isomorphic iff there exist an isomorphism of algebras
$\gamma :A_1\to A_2$ together with unitary operator $T:H_1\to H_2$ such that
%the following two diagrams commute
\begin{equation}\label{diag1}
T\pi_{1}= \pi_{2} (\gamma\times T)
\end{equation}
%\begin{displaymath}
and
\begin{equation}\label{diagD}
TD_1=D_2\,T.
  \end{equation}
In case of spectral triples that are even, real or both,
we require in addition that
\begin{equation}\label{diagchi}
T\chi_1=\chi_2\,T,
  \end{equation}
\begin{equation}\label{diagJ}
TJ_1=J_2\,T,
  \end{equation}
or both.
\end{mydef}
\noindent
Note that \eqref{diag1} can be equivalently stated in terms of  $\hat\pi:A\to B(\mathcal{H})$, $\hat\pi(a)h=\pi(a,h)$,
as
%commutativity of
\begin{equation}\label{diag2}
Ad_T\, \hat\pi_{1} = \hat\pi_{2}\,\gamma .
\end{equation}

%\noindent
We remark that when two even real spectral triples are isomorphic
then they must have the same KO-dimension, e.g.
\eqref{diagD} and \eqref{diagJ} require that the following diagram commutes
\begin{equation}
    \xymatrix{  \mathcal{H}_{1} \ar[rr]^{J_1D_{1}} \ar[ddd] \ar[dr]^{D_1J_1} & &  \mathcal{H}_{1} \ar[ddd]  \\
   & \mathcal{H}_{1} \ar[d] \ar[ur]^{\epsilon'} & \\
    & \mathcal{H}_{2} \ar[dr]^{\epsilon'}  &\\
    \mathcal{H}_{2}\ar[rr]^{J_2D_2} \ar[ur]^{D_2J_2} &  & \mathcal{H}_{2}\,.}\\
  \end{equation}
Recall that {\em product} of two (even) spectral triples $(A_1,\mathcal{H}_1, D_1)$ with $(A_2,\mathcal{H}_2, D_2)$, that corresponds to the usual direct product of two (even dimensional) Riemannian spin manifolds, is given by
\begin{equation}\label{product}
(A_1\otimes A_2, \mathcal{H}_{1}\otimes \mathcal{H}_2,
D_1\otimes 1 +\chi_1\otimes D_2),
\end{equation}
where we assume that the algebraic tensor product of algebras can be suitably completed, and we use the usual tensor product of Hilbert spaces. Equivalently one can take
$D_1\otimes \chi_2+1\otimes D_2$ as Dirac operator.
Furthermore in case
of even real spectral triples the grading is $\chi_1\otimes \chi_2$, and the real structure is $J_1\otimes J_2$.

By {\em almost commutative spectral triple} we understand the product of the canonical spectral triple
$(C^\infty(M), L^2(M, \Sigma), \D)$
with a finite spectral triple, i.e. one with finite dimensional Hilbert space $\mathcal{H}_2=\C^n$.
Thus, taking advantage of the isomorphism $L^2(M, \Sigma)\otimes  \C^n\approx L^2(M, \Sigma\times (M\times\C^n))$ we see that the Hilbert space consists  of $n$-copies of Dirac spinors (globally), and the algebra of smooth $A_2$-valued functions on $M$.
Now we introduce a generalization of this notion, in which we allow the Hilbert space to consist of $L^2$ sections of the product of $\Sigma$ with a (locally trivial) vector bundle with a typical fiber $\C^n$, and correspondingly the algebra to consist of smooth sections of some (locally trivial) bundle of finite  dimensional 
$*$-algebras.\\

\begin{mydef}\label{def}
{\em Twined almost commutative spectral triple} is
a spectral triple of the form
$(C^\infty(M,F), L^2(M,\Sigma\otimes E), D)$, where
$F$ is an algebra sub-bundle of endomorphisms of a
(locally trivial) finite rank hermitian vector bundle $E$ on $M$
and the operator $D$ has locally an almost product form
\begin{equation}\label{almprod}
D=\D\otimes {\rm id}_E + \chi\otimes D_E,
\end{equation}
where $D_E\in {\rm End(E)}$.
\end{mydef}

The adjective {\em twined} refers here to a possibility that $F$ is globally nontrivial as a vector bundle or as a bundle of algebras. We concede that
\eqref{almprod} will be often written as the composition of operators
$$D=\D\,{\rm id}_E + \chi D_E .$$

A special case of the above definition occurs when $M$ is a
quotient of another Riemannian spin manifold $\tilde{M}$
by a group of isometries (for simplicity assumed to be discrete, or even finite, and preserving the spin structure).
Then our spectral triples can be built from the canonical spectral triple
$(C^\infty(\tilde{M}), L^2(\Sigma), \D)$
on $\tilde{M}$ and a finite $G$-equivariant noncommutative spectral triple $(A, H, D)$.
More precisely, the relevant algebra will be given by the $G$-invariant subalgebra of $C^\infty(\tilde{M})\otimes A$,
the Hilbert space given by the $G$-invariant Hilbert subspace of $L^2(\Sigma)\otimes H$ and the Dirac-type operator given by $\D  \otimes 1+ \chi \otimes D $,
where $\chi$ is the chiral grading of $L^2(\Sigma)$.
This setup lends itself however to a generalization to
not necessarily finite spectral triples (i.e. to infinite dimensional $H$)
and to full-fledged nontrivial bundles $F$, which would require the use of Hilbert modules rather that sections of finite rank vector bundles as in Definition \ref{def}, and possibly internal products of spectral triples (see e.g. \cite{M}).

\section{Rational noncommutative torus}

In this section we establish an isomorphism between the standard spectral triple on the rational noncommutative torus, that is  $\left( C^\infty(\mathbb{T}^2_{p/q}), L^2(\mathbb{T}^2_{p/q})\otimes \mathbb{C}^2, D_{p/q}\right),$ and two other spectral triples.  The first one is $\left(\Gamma^\infty(F),  L^2(F)\otimes \mathbb{C}^2, D_F\right)$, where $F$ is
an algebra bundle of $q\times q$ matrices over the torus $\mathbb{T}^2$, and $D_F$ is certain differential operator on $F$.
This construction originates from a known (see e.g. \cite{bondia}) isomorphism between the algebras $C(\mathbb{T}^2_{p/q})$ and the continuous sections
of $F$, which as a bundle of algebras is not a product bundle.

It is an example of Definition \ref{def}
with $F$ regarded as self-endomorphisms consisting of fiber-wise left multiplication.
The second one, denoted as $\left(\mathcal{A}_{p/q},\mathcal{H}_{p/q},\mathcal{D}_{p/q}\right),$
can be interpreted as a twined almost commutative spectral triple as well since
$\mathcal{A}_{p/q}$ is the subalgebra of $\mathbb{Z}_q \times \mathbb{Z}_q$-invariants of $C(\mathbb{T}^2)\otimes M_q$,
 $\mathcal{H}_{p/q}$ is a Hilbert subspace of $\mathbb{Z}_q \otimes \mathbb{Z}_q$-invariants in $L^2(\mathbb{T}^2)\otimes M_q\otimes \mathbb{C}^2$,
and $D_{p/q}$ is the canonical Dirac operator $\D$ on $\mathbb{T}^2$ tensor the identity on $M_q$.

The Hilbert spaces of both these spectral triples are given as completions of the corresponding two algebras with respect to certain norms, while the Dirac operators are defined in such a way that their action on the respective spaces satisfy \eqref{diagD}.\\

\subsection{The standard spectral triple}\label{The standard spectral triple}

We recall the definition of the standard spectral triple
on the noncommutative torus $\mathbb{T}_\theta$
with a parameter $\theta$.\\

\noindent
$\bullet$ The algebra.
\begin{mydef}
Let $U,V$ be two unitary generators with the commutation relation
\begin{equation}
UV=\lambda VU,
\end{equation}
where $\lambda=e^{2 \pi i \theta}$, $0\leq\theta\leq 1$. The algebra $C^\infty(\mathbb{T}^2_\theta)$ of smooth complex valued functions on the noncommutative torus consists of the series:
\begin{equation}\label{ncf}
a=\sum_{(m,n)\in \mathbb{Z}_2}a_{mn}U^{m}V^{n},
\end{equation}
where
the double sequence of $a_{mn}\in \C$ satisfies
\begin{equation}\label{bdda}
\parallel a \parallel_k:= \sup_{(m,n) \in \mathbb{Z}^2}(1+m^2+n^2)^k \mid a_{mn}\mid^2< \infty, \quad \forall k\in \N.
\end{equation}
\end{mydef}
Clearly, when $ \theta=0$ the algebra $C^\infty(\mathbb{T}^2_0)$ is isomorphic to the algebra
$C^\infty\left(\mathbb{T}^2\right)$ of smooth complex functions on the classical torus
$\mathbb{T}^2:=\{(z_1,z_2)\in\C^2\,|\,|z_1|=1=|z_2|\}$,
generated by the identity functions on the two factors $S^1\subset \C$ denoted (with a slight abuse of notation) by $z_1$ and $z_2$ and called coordinate functions on $\mathbb{T}^2$. It should be clear from the context it we regard $z_1$ and $z_2$ as numbers or as functions.\\

\noindent$\bullet$ The Hilbert space $L^2\left(\mathbb{T}^2_{\theta}\right)\otimes\C^2$.\\
Denote by $\mathfrak{t}$ the following tracial state on
$C^\infty\left(\mathbb{T}^2_{\theta}\right):$
\begin{equation}\label{Trtoro}
\mathfrak{t}\left(\sum a_{mn}U^{m}V^n\right)=a_{00}\ ,
\end{equation}
where $a_{00}$ is the coefficient of $1.$ We will refer to $\mathfrak{t}$ as {\em trace}.
\\The trace defines a sesquilinear form on $C^\infty\left(\mathbb{T}^2_{\theta}\right)$ by
\begin{equation}
\langle a\mid b \rangle= \mathfrak{t}(a^*b)
\end{equation}
and a norm:
\begin{equation}
\mid \mid a \mid \mid = \pm \sqrt{\mathfrak{t}(a^*a)}.
\end{equation}
We denote $L^2\left(\mathbb{T}^2_{\theta}\right)$ the Hilbert space obtained by completion of $C^\infty\left(\mathbb{T}^2_{\theta}\right)$ with respect to this norm.
It carries a $*-$representation of $C^\infty\left(\mathbb{T}^2_{\theta}\right)$ by left multiplication:
\begin{equation}
\pi(a): b\mapsto ab,
\end{equation}
i.e. $L^2\left(\mathbb{T}^2_{\theta}\right)$ is a left $*$-module over $C^\infty\left(\mathbb{T}^2_{\theta}\right).$
The elements of $L^2\left(\mathbb{T}^2_{\theta}\right)$
are analogues of Weyl spinors on the noncommutative torus.

As the full Hilbert space of analogues of Dirac spinors on the noncommutative torus we take $L^2\left(\mathbb{T}^2_{\theta}\right)\otimes\C^2$,
with the diagonal $*$-module structure over $C^\infty\left(\mathbb{T}^2_{\theta}\right).$\\

\noindent$\bullet$ The Dirac operator $D_{\theta}$.\\
The trace \eqref{Trtoro} is invariant under the actions of the  torus group $\mathbb{T}^2$ on $C^{\infty}\left(\mathbb{T}^2_{\theta}\right)$ by automorphisms defined by
\begin{equation}\label{U1U1}
U\mapsto z_1 U, \quad V\mapsto z_2 V,\quad \forall  (z_1,z_2)\in \mathbb{T}^2.
\end{equation}

   These actions are infinitesimally generated by the two commutating derivations
\begin{eqnarray}
\delta_{1}U= i U, \quad \delta_{1}V=0
\\  \delta_{2}U=0, \quad \delta_{2}V= i V.
\end{eqnarray}
The canonical flat Dirac operator on the Hilbert space $L^2(\mathbb{T}^2_{\theta})\otimes\C^2$ is a
contraction of derivations $\delta_\ell$  with Pauli matrices $\sigma_\ell$ (Clifford multiplication):
\begin{equation}\label{Dirac}
D_{\theta}=i\left(\sigma_1\delta_1 +\sigma_2 \delta_2 \right):=
\left(\begin{matrix} 0 && i\delta_1+\delta_2
\\  i\delta_1-\delta_2 && 0
\end{matrix}\right).
\end{equation}
Recall that the spectral triple $\left( C^\infty\left(\mathbb{T}^2_{\theta}\right), L^2\left(\mathbb{T}^2_{\theta}\right)\otimes \mathbb{C}^2, D_{\theta}\right)$ is even, with the grading $\chi_{\theta}=\left(\begin{matrix} 1 & 0
\\0 &-1
\end{matrix}\right)$ that commutes with every $a \in C^\infty\left(\mathbb{T}^2_{\theta}\right)$ and anticommutes with $D_{\theta}.$
\\This spectral triple is also real, by taking as real structure
\begin{equation}\label{rs}
  J_{\theta}=-i J^0_{\theta} \otimes (\sigma_2 \circ c.c.),
  \end{equation}
   where $J_0: \mathcal{H}^0_{\theta}\rightarrow \mathcal{H}^0_{\theta}$ is the Tomita conjugation:
  \begin{equation}
  J^0(a)=a^*.
  \end{equation}

It is immediately seen that for $\theta =0 $
the derivations $\delta_\ell$ become the coordinate derivatives
that can be expressed also as $\partial_\ell=z_\ell \frac{\partial}{\partial z_\ell}$. Furthermore
%on the usual torus $\mathbb{T}^2$
 the standard spectral triple described above is just the canonical spectral triple and in particular
\begin{equation}\label{D0}
D_{0}=i\left(\sigma_1\partial_1+\sigma_2\partial_2 \right)=
 \left(\begin{matrix} 0 && i\partial_1+\partial_2
\\  i\partial_1-\partial_2 && 0
\end{matrix}\right)
\end{equation}
is the Dirac operator constructed from the (flat) Levi-Civita connection.
It should be however mentioned that this
corresponds to a particular choice of a spin structure on the noncommutative torus; we will describe the other spin structures in Section\,\ref{Inequivalent spin structures}.

In the following two subsections we will focus on the case in when $\theta$ is a rational number, so unless stated differently from now on:
\begin{equation}
\theta=p/q, \quad\mathrm{i.e.,}\quad
\lambda= e^{2 \pi i p/q},
\end{equation}
where $0<p<q\in \Z$ are relatively prime.
In this case the center $\mathcal{Z}_{p/q}$
of $C^\infty\left(\mathbb{T}^2_{p/q}\right)$
is generated by $U^q$ and $V^q$, and is just the invariant subalgebra for the finite subgroup $G\approx \mathbb{Z}_q \times \mathbb{Z}_q$
of \eqref{U1U1} of pairs of $q$th roots of 1.
%isomorphic to  $G=\mathbb{Z}_q \times \mathbb{Z}_q$.
The center $\mathcal{Z}_{p/q}$ is isomorphic to
$C^\infty\left(\check{\mathbb{T}}^2\right)$, where
$\check{\mathbb{T}}^2$ is the quotient of $\mathbb{T}^2$
by the free action $\kappa$ of $G$
%=\mathbb{Z}_q \times \mathbb{Z}_q$,
given by
\begin{equation}\label{kappa}
\kappa_{m,n}(z_1,z_2)=(\lambda^m z_1,\lambda^nz_2).
\end{equation}
Clearly $\mathbb{T}^2$ is a $q^2$-fold covering of $\check{\mathbb{T}}^2$ (a principal $G$-bundle),
but $\check{\mathbb{T}}^2$ is also diffeomorphic to a torus.
We denote by $[z_1,z_2]_\kappa$ the $\kappa$-equivalence classes (orbits of $\kappa$).
From the metric point of view we will equip $\mathbb{T}^2$ first with the standard flat Riemannian metric,
and then also with some other $G$-invariant ones. They descend to $\check{\mathbb{T}}^2$ so that $\pi$ is an isometric submersion. Then $\check{\mathbb{T}}^2$ is actually isometric to  ${\mathbb{T}}^2$
when the latter one is equipped with the original metric rescaled by $q^2$.
These metric properties reflect themselves via certain invariance properties of $D_0$ in expression \eqref{Dirac}. Namely it commutes with the derivations $\delta_\ell$ and with the torus group
action \eqref{U1U1}
%$U(1)\times U(1)$ of automorphisms
they generate; thus in particular it is invariant under the subgroup $G$.

\subsection{First isomorphic spectral triple}\label{First isomorphic spectral triple}

\noindent$\bullet$ The algebra $\Gamma^\infty(F)$.

As it is well known the C*-algebra $C\left(\mathbb{T}^2_{p/q}\right)$ of the rational noncommutative torus is isomorphic to the algebra of continuous sections of certain vector bundle $F$ of $q\times q$ matrix algebras, over a 2-torus.
The same holds of course also on the smooth level,
as we will present now in full detail.
%For that we need some notation.

Denote by $M_q$  the algebra of $q \times q$ complex matrices, and define $R,S\in M_q$ as
%the following two unitary matrices:
\begin{eqnarray}\label{mRS}
R=\left(\begin{matrix} 1 & & & 0
\\ &\lambda & &
\\& &\lambda^2  &
\\ ...&...&...&...
\\ & & & \lambda^{q-1}
\end{matrix}\right), & S=\left(\begin{matrix} 0 & & & 1
\\ 1&0 & &
\\& 1& &
\\ ...&...&...&...
\\ & &1 & 0
\end{matrix}\right).
\end{eqnarray}
We have $R^{q}=S^{q}=\mathbbm{1}$ and $RS=\lambda SR.$
Consider another action $\tau$ of $G$ on $\mathbb{T}^2\times M_q$:
% given by
$$\tau_{m,n}(z_1,z_2, A)=(\lambda^m z_1,\lambda^nz_2, R^mS^nAS^{-n}R^{-m}),\quad \forall A\in M_q\ .$$
%where $$.

We denote by $[z_1,z_2,A]_\tau $
the $\tau$-equivalence classes (orbits).
The space $F$ of orbits
 of $\tau$ forms a vector bundle over $\check{\mathbb{T}}^2$ with typical fiber $M_q$, and (well defined) projection
 $$\pi_F:F\to \check{\mathbb{T}}^2,\quad
 \pi_F:[z_1,z_2,A]_\tau\mapsto [z_1,z_2]_\kappa \ .$$
 % (which is well defined on orbits).\\

\noindent
We remark that the bundle $F$ is associated to the principal $G$-bundle ${\mathbb{T}^2}$ over
$\check{\mathbb{T}}^2$, via the representation $\rho:G\to End(M_q)$, given by
$$\rho_{m,n}( A)=R^mS^nAS^{-n}R^{-m},\quad A\in M_q\, . $$
Indeed, the assignment
$$F\ni[z_1,z_2,A]_\tau\mapsto [z_1,z_2,A]_\rho\in {\mathbb{T}^2}\times_\rho M_q\, ,$$
is well defined since $[\cdot, \cdot, \cdot]_\rho$ are the equivalence classes of the relation
$$(\lambda^m z_1,\lambda^n z_2,A)\sim (z_1,z_2,\rho_{m,n}^{-1}A),$$
and is an isomorphism. $\quad\diamond$\\

The smooth sections of $F$ form a $*$-algebra $\Gamma^\infty(F)$ with respect to the point-wise multiplication and point-wise hermitian conjugation (of matrices).
Its obvious completion is the C*-algebra of continuous sections.
With these observations we can state:
\begin{lemma}\label{ia1}
The map $Q$ defined on the generators by
\begin{equation}\label{map}
U\mapsto \xi_{U},\, V\mapsto\xi_{V},
\end{equation}
where
$$
\xi_U:\check{\mathbb{T}}^2\to F, \quad
[z_1,z_2]_\kappa\mapsto [z_1,z_2,z_1S]_\tau\ ,
$$
$$
\xi_V:\check{\mathbb{T}}^2\to F, \quad
[z_1,z_2]_\kappa\mapsto [z_1,z_2,z_2R^{-1}]_\tau
$$
extends to a $*$-isomorphism of algebras $Q:C^{\infty}\left(\mathbb{T}^2_{p/q}\right)\rightarrow
\Gamma^\infty(F)$ .
\end{lemma}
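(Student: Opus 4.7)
The plan is to verify that $Q$ is a well-defined $*$-homomorphism on the unital $*$-algebra $\mathcal{P}$ generated by $U,V$, then to exhibit an explicit inverse via the Weyl basis decomposition of $M_q$, and finally to extend the isomorphism from $\mathcal{P}$ to the full Fréchet algebra using the rapid-decay condition \eqref{bdda}.

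For the first step, I would check well-definedness of $\xi_U$ and $\xi_V$. Computing $\tau_{m,n}(z_1,z_2,z_1 S)$ requires the identity $R^mSR^{-m}=\lambda^m S$, which is immediate from $RS=\lambda SR$ by induction; this produces the factor $\lambda^m$ needed to pass the scalar $z_1$ over into the shifted $\lambda^m z_1$. The verification for $\xi_V$ is analogous, using $S^nR^{-1}S^{-n}=\lambda^n R^{-1}$. Unitarity of both sections in the fibers of $F$ is clear from $|z_1|=|z_2|=1$ and unitarity of $R,S$, so they are $*$-invertible in $\Gamma^\infty(F)$. Finally, the identity $SR^{-1}=\lambda R^{-1}S$ gives
\begin{equation*}
\xi_U\,\xi_V=\lambda\,\xi_V\,\xi_U,
\end{equation*}
matching the sole defining relation of $\mathcal{P}$. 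Therefore $Q$ extends uniquely to a $*$-algebra homomorphism $\mathcal{P}\to\Gamma^\infty(F)$.

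For bijectivity on the polynomial level, the key observation is that $\{S^iR^{-j}\}_{0\le i,j<q}$ is a $\mathbb{C}$-basis of $M_q$. A smooth section $f\in\Gamma^\infty(F)$ corresponds uniquely to a smooth $\tau$-equivariant lift $\tilde f:\mathbb{T}^2\to M_q$, which I decompose as $\tilde f(z_1,z_2)=\sum_{i,j=0}^{q-1}f_{ij}(z_1,z_2)\,S^iR^{-j}$. A short computation using $R^mS^n S^iR^{-j} S^{-n}R^{-m}=\lambda^{mi+nj}\,S^iR^{-j}$ translates the $\tau$-equivariance of $\tilde f$ into the covariance law $f_{ij}(\kappa_{m,n}(z_1,z_2))=\lambda^{mi+nj}f_{ij}(z_1,z_2)$. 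Consequently $z_1^{-i}z_2^{-j}f_{ij}$ is $\kappa$-invariant and descends to a smooth function $g_{ij}$ on $\check{\mathbb{T}}^2$. Reading off the $(k,l)$-Fourier coefficient of $g_{ij}$ as $a_{qk+i,\,ql+j}$ yields a unique candidate preimage $a=\sum a_{mn}U^mV^n$; that $Q(a)=f$ then follows by direct expansion, using $(z_1S)^m(z_2R^{-1})^n=z_1^mz_2^n\,S^mR^{-n}$ together with $S^q=R^q=\mathbbm{1}$. Injectivity is the same calculation run backwards: $Q(a)=0$ forces every $g_{ij}$ to vanish on $\check{\mathbb{T}}^2$, hence all $a_{mn}=0$.

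The main technical point I expect to spend care on is matching the rapid-decay condition \eqref{bdda} on the $a_{mn}$ with the smoothness of the $g_{ij}$ on $\check{\mathbb{T}}^2$. Under the change of variables $m=qk_1+i$, $n=qk_2+j$ one has $(1+m^2+n^2)^k\asymp(1+k_1^2+k_2^2)^k$ uniformly in the finite range of $(i,j)$, so the Schwartz condition for $\{a_{mn}\}$ on $\mathbb{Z}^2$ is equivalent to the Schwartz condition for the Fourier coefficients of each $g_{ij}$ on the quotient torus $\check{\mathbb{T}}^2$. Hence $Q$ is a Fréchet $*$-isomorphism, proving the lemma.
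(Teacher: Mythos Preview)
Your proposal is correct and follows the same outline as the paper's proof, which is a one-sentence sketch appealing to well-definedness of $\xi_U,\xi_V$, the exchange rule $\xi_U\xi_V=\lambda\,\xi_V\xi_U$, and ``the properties of the Fourier coefficients of a smooth function.'' You supply precisely the details this sketch omits: the explicit Weyl-basis decomposition $\{S^iR^{-j}\}$ of $M_q$, the covariance law $f_{ij}\circ\kappa_{m,n}=\lambda^{mi+nj}f_{ij}$, and the reindexing $m=qk_1+i$, $n=qk_2+j$ that matches the rapid-decay condition on $\{a_{mn}\}$ with smoothness of each $g_{ij}$ on $\check{\mathbb{T}}^2$; this is also what the paper uses immediately after the lemma when it writes an arbitrary section as $\sum_{m,n=1}^q f_{mn}\xi_U^m\xi_V^n$ with $f_{mn}\in C^\infty(\check{\mathbb{T}}^2)$.
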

\begin{proof}
It is straightforward to check that $\xi_U$ and $\xi_V$ are well defined and \eqref{map} extends to a $*$-isomorphism due to the properties of the Fourier coefficients of a smooth function and the exchange rule
$\xi_{U} \xi_{V}= \lambda\, \xi_{V}\xi_{U}$.
\end{proof}
\noindent
%\red{
%\noindent
%Remark.

Note that $F$ is trivial as a vector bundle but
%when $\theta\neq 0$
it is nontrivial as the bundle of algebras.
Indeed the monomials ${\xi_{U}}^{m}{\xi_{V}}^n$
define a basis of $\Gamma^\infty(F)$ over $C^\infty(\check{\mathbb{T}}^2)$ and
any $\xi\in \Gamma^\infty(F)$ can be written as
\begin{equation}\label{basisF}
 \xi=\sum_{m,n=1}^q f_{mn}{\xi_{U}}^{m}{\xi_{V}}^n,
\end{equation}
where $f_{mn}\in C^\infty(\check{\mathbb{T}}^2),\; \forall\, 1\leq m,n\leq q$.
By viewing the coefficients $f_{mn}$ as a $q\times q$ matrix of functions in $C^\infty(\check{\mathbb{T}}^2)$ we can write an isomorphism of vector bundles
$$F\approx \check{\mathbb{T}}^2 \times M_q, \quad \xi([z_1,z_2]_\kappa)\mapsto
\left([z_1,z_2]_\kappa, f_{mn}([z_1,z_2]_\kappa\right).$$
However for $\theta=p/q\neq 0$ considered here this is not an isomorphism of algebra bundles
since the multiplication of sections $\xi$ does not correspond to the matrix multiplication of $f_{mn}$.

%\noindent
%Remark.
%Note also that 
As a matter of fact $F$ is the bundle of all vertical (or based)
endomorphisms of another complex vector bundle $E$ of rank $q$. Namely, $E$ is the orbit space of another free action of $G$ this time on ${\mathbb{T}}^2\times \C^q$, given by
$$(z_1,z_2, r)\mapsto(\lambda^m z_1,\lambda^nz_2, R^mS^nr),$$
where $r\in \C^q$. In fact
%it can be easily checked that
the (completed) algebras
$C\left(\mathbb{T}^2_{p/q}\right)$  and
$C\left(\check{\mathbb{T}}^2\right)$ are strong Morita equivalent via the ${C\!\left(\check{\mathbb{T}}^2\right)\!-\! C\!\left(\mathbb{T}^2_{p/q}\right)}$\, bimodule of
continuous sections of $E$.
%\,$\diamond$\\
The bundle $E$ won't play however any role in the definition of the Hilbert space representation of
$\Gamma^\infty(F)$, for which we shall employ the
bundle $F$ itself, with $\Gamma^\infty(F)$ self action by left multiplication (left regular representation).\\

%\noindent

Observe also that the center of $\Gamma^\infty(F)$ is generated by
$\xi_{U^q}=\xi_{U}^q$ and $\xi_{V^q}=\xi_{V}^q$ and is isomorphic to the center $\mathcal{Z}_{p/q}$
of $C^\infty\left(\mathbb{T}^2_{p/q}\right)$, and thus also to
$C^\infty\left(\check{\mathbb{T}}^2\right)$, in turn
identified with the $G$-invariant subalgebra
$C^\infty({\mathbb{T}}^2)^{G}$ of
$C^\infty({\mathbb{T}}^2)$,
via the map that sends $\xi_{U}^q\mapsto z_1^q$ and
$\xi_{V}^q\mapsto z_2^q$. Of course over this isomorphism $\Gamma^\infty(F)$ and
$C^\infty\left(\mathbb{T}^2_{p/q}\right)$ are isomorphic
as modules over their centers. \\

\noindent$\bullet$
The Hilbert space $L^2(F)\otimes \mathbb{C}^2$.

Now we look for a Hilbert space which can serve as a codomain
of the isometric extension of $Q$ to $L^2(\mathbb{T}^2_{p/q})$.
For that define on $\Gamma(F)$
a tracial state $\mathfrak{t}_F:=Q\mathfrak{t}Q^{-1} $, i.e.
\begin{equation}
\mathfrak{t}_F\left(\xi\right)
=\int_{\check{\mathbb{T}}^2}f_{00}
\end{equation}
for $\xi$ as in \eqref{basisF}, where $\int_{\check{\mathbb{T}}^2}$ is the normalized integral. The corresponding sesquiliniar form reads
\begin{equation}\label{sesqF}
\langle \xi \mid \xi' \rangle= \mathfrak{t}_F(\xi^{*}\xi'),
\end{equation}
where
\begin{equation}
\xi^*=\sum_{m,n=1}^q \bar f_{mn}{\xi_{V}}^{-n}{\xi_{U}}^{-m}.
\end{equation}

We define first the Hilbert space $L^2\left(F\right)$ as the completion of $\Gamma(F)$ with respect to the norm defined by the scalar product \eqref{sesqF}.
%$L^2\left(F\right)$
It carries a $*-$representation of $\Gamma(F)$ by left multiplication, i.e.\,it is a $\Gamma(F)-$module
(and similarly for $\Gamma^\infty(F)$). Then as the full Hilbert space we take
$L^2(F)\otimes \mathbb{C}^2$.
Taking advantage of the (inverse) isomorphism $Q$ given by \eqref{map}
and its Hilbert space amplification we have
\begin{lemma}\label{im1}
The $*$-representation of $\Gamma^\infty(F)$
on $L^2(F)\otimes \mathbb{C}^2$ is unitary equivalent
to the $*$-representation of $C^\infty\left(\mathbb{T}^2_{p/q}\right)$ on
 $ L^2\left(\mathbb{T}^2_{p/q}\right)\otimes \mathbb{C}^2$.
\end{lemma}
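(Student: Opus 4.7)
The plan is to exhibit an explicit unitary $T:L^2(\mathbb{T}^2_{p/q})\otimes\mathbb{C}^2\to L^2(F)\otimes\mathbb{C}^2$ of the form $T=T_0\otimes\mathrm{id}_{\mathbb{C}^2}$, where $T_0$ is obtained as the Hilbert-space closure of the algebra isomorphism $Q$ of Lemma \ref{ia1}, and then verify that $T$ intertwines the left-multiplication representations via the isomorphism $Q$.

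First I would observe that by the very definition of the trace on $\Gamma(F)$, namely $\mathfrak{t}_F:=Q\mathfrak{t}Q^{-1}$, the map $Q$ is a $*$-preserving isometry from $C^\infty(\mathbb{T}^2_{p/q})$ equipped with the inner product $\langle a\mid b\rangle=\mathfrak{t}(a^*b)$ onto $\Gamma^\infty(F)$ equipped with \eqref{sesqF}. Indeed, for $a\in C^\infty(\mathbb{T}^2_{p/q})$ one has $\|Q(a)\|^2_F=\mathfrak{t}_F(Q(a)^*Q(a))=\mathfrak{t}_F(Q(a^*a))=\mathfrak{t}(a^*a)=\|a\|^2$, using that $Q$ is a $*$-homomorphism. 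Since $C^\infty(\mathbb{T}^2_{p/q})$ is dense in $L^2(\mathbb{T}^2_{p/q})$ and $\Gamma^\infty(F)$ contains $\Gamma(F)$ as a dense subset of $L^2(F)$ (at worst one first extends to $\Gamma(F)$ and then completes), $Q$ extends uniquely to a unitary operator $T_0:L^2(\mathbb{T}^2_{p/q})\to L^2(F)$. Setting $T:=T_0\otimes\mathrm{id}_{\mathbb{C}^2}$ gives a unitary between the full Hilbert spaces.

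Next I would check the intertwining property. On the dense subspaces, for $a\in C^\infty(\mathbb{T}^2_{p/q})$ and $b\in C^\infty(\mathbb{T}^2_{p/q})$, one computes
\begin{equation*}
T_0\bigl(\pi(a)b\bigr)=Q(ab)=Q(a)\,Q(b)=\pi_F(Q(a))\,T_0(b),
\end{equation*}
so $T_0\pi(a)=\pi_F(Q(a))T_0$ on the dense subspace. Boundedness of $\pi_F(Q(a))$ follows since $Q(a)\in\Gamma^\infty(F)$ is a bounded section of a bundle of finite dimensional $C^*$-algebras and acts by left multiplication. Tensoring with $\mathrm{id}_{\mathbb{C}^2}$ and extending by continuity yields $T\pi(a)=\pi_F(Q(a))T$ on all of $L^2(\mathbb{T}^2_{p/q})\otimes\mathbb{C}^2$, which is exactly the unitary equivalence asserted (cf.\ \eqref{diag1} with $\gamma=Q$).

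There is no substantial obstacle beyond these bookkeeping checks; the essential point is that the trace $\mathfrak{t}_F$ was defined so as to make $Q$ an isometry, so the unitary equivalence is almost tautological. The only mildly delicate issue is verifying that $\Gamma^\infty(F)$ is dense in $L^2(F)$ (a standard approximation argument, since smooth sections are dense in continuous sections which are dense in $L^2$) and that the representation of $\Gamma^\infty(F)$ by left multiplication on $L^2(F)\otimes\mathbb{C}^2$ is indeed by bounded operators, which reduces to the same statement for the $C^*$-algebra of continuous sections.
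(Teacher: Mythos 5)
Your proof is correct and follows exactly the route the paper intends: since $\mathfrak{t}_F$ is defined as $Q\mathfrak{t}Q^{-1}$, the algebra isomorphism $Q$ of Lemma \ref{ia1} is automatically a pre-Hilbert-space isometry, so its closure (tensored with $\mathrm{id}_{\mathbb{C}^2}$) is the required unitary, and the intertwining property is just multiplicativity of $Q$ on the dense subalgebras. The paper states this only as a one-line "Hilbert space amplification" of $Q$; your write-up supplies the same argument with the routine density and boundedness checks made explicit.
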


\noindent$\bullet$ The Dirac operator $D_F$.

 We define the derivations algebra $\Gamma^\infty(F)$ as
 $\partial^F_\ell:= Q \delta_\ell Q^{-1}$, so that their actions on the generators of $\Gamma^\infty(F)$ are just
\begin{equation}
%\begin{split}
\partial^F_1 \xi_U=  i \xi_U, \quad \partial^F_1 \xi_V=0, \quad
%\\
\partial^F_2 \xi_U= 0,\quad \partial^F_2 \xi_V=i \xi_V.
%\end{split}
\end{equation}
The Dirac operator $D_F$ which
%makes the diagram
satisfies \eqref{diagD} for $D_1=D_\theta$ and $D_2=D_F$ is then
%commute is
\begin{equation}\label{df}
D_F=i(\sigma_1\partial^F_{1}  + \sigma_2\partial^F_{2} )\ .
\end{equation}
% and extends as derivations to the whole algebra $\Gamma^\infty(F)$.

We call (locally defined) $M_q$-valued function $\widetilde\xi$
on $\T^2$ {\em local components} of $\xi$
when
$$\xi([z_1,z_2]_\kappa)= [z_1,z_2,\widetilde\xi(z_1,z_2)]_\tau.$$
In particular the local components of $\xi_U$ and $\xi_V$ are respectively 
$z_1S$ and $z_2R^{-1}$.
Next we call $\widetilde T$ local components of an operator $T$ on $\Gamma^\infty(F)$
when  $\widetilde{T\xi} = \widetilde T\, \widetilde\xi$,
and similarly for operators on $L^2(F)\otimes \mathbb{C}^2$.
In particular the local components of the differential operators $\partial^F_\ell$
are simply the coordinate derivatives
\begin{equation}\label{derFF}
\widetilde{\partial^F_\ell} = \partial_\ell\ .
%\widetilde\xi .
\end{equation}
Thus local components of $D_F$ are 
\begin{equation}\label{dff}
\widetilde{D_F} =i(\sigma_1\partial_{1}  + \sigma_2\partial_{2} ).
%\widetilde\xi .
\end{equation}
%\red{
Note that $\widetilde{D_F}$ looks quite like the canonical Dirac operator \eqref{D0}
on the torus constructed from the (flat) Levi-Civita connection of the standard metric on $\mathbb{T}^2$,
and $D_F$ in fact is unitary equivalent to \eqref{D0}.\\

\noindent$\bullet$ The isomorphism.

By using Lemmata \eqref{ia1} and \eqref{im1} and the above discussion, we obtain:
\begin{prop}
The spectral triple $\left(\Gamma^\infty(F),  L^2(F)\otimes \mathbb{C}^2, D_F\right)$
is isomorphic to the standard spectral triple
$\left( C^\infty\left(\mathbb{T}^2_{p/q}\right), L^2\left(\mathbb{T}^2_{p/q}\right)\otimes \mathbb{C}^2, D_{p/q}\right)$,
where $D_{p/q}$ is given by \eqref{Dirac} for $\theta=p/q$.
\end{prop}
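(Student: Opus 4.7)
The plan is to unpack Definition \ref{diags} and show that the data already assembled in Lemmata \ref{ia1} and \ref{im1}, together with the construction of $\partial^F_\ell$, furnish directly all the required intertwining relations. Concretely, I would take the algebra isomorphism $\gamma := Q$ from Lemma \ref{ia1}, and let $T$ be the unitary $L^2(\mathbb{T}^2_{p/q})\otimes \mathbb{C}^2 \to L^2(F)\otimes \mathbb{C}^2$ provided by Lemma \ref{im1}, namely the Hilbert-space extension of $Q$ (obtained because $\mathfrak{t}_F = Q\,\mathfrak{t}\,Q^{-1}$ is an isometry of pre-Hilbert structures) tensored with $\mathrm{id}_{\mathbb{C}^2}$.

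Next I would verify the two diagrams. Relation \eqref{diag1}, i.e.\ $T\pi_1 = \pi_2(\gamma\times T)$, is exactly the unitary equivalence of representations recorded in Lemma \ref{im1}, since $\pi_1$ and $\pi_2$ are both given by left multiplication and $Q$ is a $*$-isomorphism compatible with the traces. For \eqref{diagD} I would use that, by the very definition $\partial^F_\ell := Q\,\delta_\ell\, Q^{-1}$, the Hilbert-space extension of $Q$ intertwines the derivations $\delta_\ell$ with $\partial^F_\ell$; tensoring with $\mathrm{id}_{\mathbb{C}^2}$ and applying the Pauli matrices $\sigma_\ell$ on the spinor factor (which $T$ leaves untouched), one gets
\begin{equation}
T\,D_{p/q} \;=\; T\,i(\sigma_1\delta_1+\sigma_2\delta_2)
\;=\; i(\sigma_1\partial^F_1+\sigma_2\partial^F_2)\,T \;=\; D_F\,T\, ,
\end{equation}
which is \eqref{diagD}.

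The remaining point is to check that the domain on which $\partial^F_\ell$ is defined (smooth sections) is dense in $L^2(F)$ and that $D_F$ so defined admits a selfadjoint extension with compact resolvent; in fact this is automatic once the unitary equivalence of the two Dirac operators through $T$ is established, because these spectral properties are preserved under unitary equivalence, and $D_{p/q}$ is known to satisfy them. Similarly, boundedness of $[D_F,\pi(\xi)]$ for $\xi\in\Gamma^\infty(F)$ is transported from $C^\infty(\mathbb{T}^2_{p/q})$.

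The main, though essentially bookkeeping, obstacle is making sure the different roles of $Q$ keep matching: on the algebra side $Q$ is a $*$-isomorphism, on the Hilbert-space side its extension must be an isometry (which hinges on $\mathfrak{t}_F = Q\,\mathfrak{t}\,Q^{-1}$), and on the derivations side it must conjugate $\delta_\ell$ to $\partial^F_\ell$ — these three compatibilities all rest on the \emph{same} map $Q$ and are what allow the three diagrams of Definition \ref{diags} to close simultaneously. Once this observation is made, the proof reduces to citing Lemmata \ref{ia1}, \ref{im1} and the definition \eqref{df} of $D_F$.
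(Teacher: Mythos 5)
Your proposal is correct and follows essentially the same route as the paper: the paper's proof simply invokes Lemma \ref{ia1}, Lemma \ref{im1}, and the definitions $\partial^F_\ell := Q\,\delta_\ell\,Q^{-1}$ and \eqref{df}, which is exactly the content you spell out in detail (the unitary $T$ being the $L^2$-extension of $Q$ tensored with $\mathrm{id}_{\mathbb{C}^2}$, with \eqref{diag1} from Lemma \ref{im1} and \eqref{diagD} built into the definition of $D_F$). Your additional remark that the spectral-triple axioms for $D_F$ transfer automatically under the unitary equivalence is a sound and worthwhile observation that the paper leaves implicit.
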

Moreover we can equip $\left(\Gamma^\infty(F), L^2(F)\otimes \mathbb{C}^2, D_F\right)$ with a grading and real structure
and enhance the isomorphism
to an isomorphism of even real spectral triples.
\\
The suitable grading $\chi$ is just given by id$\otimes$diag(1,-1).
Furthermore it is evident that
%commutativity of diagram
\eqref{diagJ} holds for the following real structure:
  \begin{equation}\label{rsfirst}
  J_F=-i J^0_F \otimes (\sigma_2 \circ c.c.),
\end{equation}
 where $J^0_F$ acts on a section $\xi:\mathbb{T}^2\rightarrow F$ by hermitian conjugation, that is:
 \begin{equation}
 J^0_F\left(\sum_{m,n=1}^q f_{mn}{\xi_{U}}^{m}{\xi_{V}}^n \right)=\sum_{m,n=1}^q \overline{f}_{mn}\lambda^{-mn}{\xi_{U}}^{-m}{\xi_{V}}^{-n}.
\end{equation}
Notice that $J_F$ admits a decomposition along the infinite and finite dimensional component of its Hilbert space:
 \begin{equation}
  J_F=J \otimes h.c.
\end{equation}
where $J:L^2(\mathbb{T}^2,\Sigma)\otimes \mathbb{C}^2\rightarrow L^2(\mathbb{T}^2,\Sigma)\otimes \mathbb{C}^2$ is the charge conjugation on the spinor bundle of the commutative torus and $h.c.$ denotes fiber-wise hermitian conjugation on the matrix algebra $M_q.$
\subsection{Second isomorphic spectral triple}
\label{Second isomorphic spectral triple}

\noindent$\bullet$ The algebra $\mathcal{A}_{p/q}$.

Now we pass to another description of
$C^{\infty}\left(\mathbb{T}^2_{p/q}\right)$ which will explain its twined product nature.
The starting point is the natural bijective identification of
an arbitrary smooth section of the bundle $F$ with
a smooth function $\varphi:\mathbb{T}^2\to M_q$ that is
$\kappa$-$\rho$-equivariant, i.e.
$\varphi\circ\kappa_{m,n}= \rho_{m,n} \circ \varphi$,
or more explicitly
\begin{equation}\label{kapparho}
 \varphi(\lambda^m z_1,\lambda^n z_2)= R^mS^n
\varphi(z_1,z_2)S^{-n}R^{-m},
\end{equation}
via the algebra isomorphism
$$
\varphi \mapsto \xi_\varphi,
$$
where
$$
\xi_\varphi:\check{\mathbb{T}}^2\to F, \quad
[z_1,z_2]_\kappa\mapsto [z_1,z_2,\varphi(z_1,z_2)]_\tau\ .
$$

Next, we observe that
a smooth function $\varphi:\mathbb{T}^2\to M_q$ is
$\kappa$-$\rho$-equivariant as in \eqref{kapparho},
exactly when it is invariant under the pullback of the
$\tau$-action of $G$, which is
$$
{\tau}_{m,n}^*(\varphi):= \rho_{m,n} \circ \varphi_{m,n}\circ  \kappa_{m,n}.
$$
Furthermore, under the standard identification
\begin{equation}\label{stid}
C^{\infty}\left(\mathbb{T}^2,M_q\right)=
C^{\infty}\left(\mathbb{T}^2\right)\otimes M_q
\end{equation}
the subalgebra $C^{\infty}\left(\mathbb{T}^2,M_q\right)^\tau$
of ${\tau}^*$-invariant functions
corresponds to the subalgebra
$$\mathcal{A}_{p/q}:=\left(C^{\infty}(\mathbb{T}^2)\otimes M_q\right)^{\kappa\otimes\rho}$$
of invariant elements
under the action of the tensor product representation $\kappa\otimes\rho$ of $G$.

With these observations we can state:
\begin{lemma}\label{lemmaiso}
The map $T$ defined on the generators by
\begin{equation}\label{T2}
U\mapsto u:=  z_1\otimes S, \quad
V\mapsto v:=z_2\otimes R^{-1},
\end{equation}
where $z_\ell$ is the $\ell$-th coordinate function on $\mathbb{T}^2$,
extends to a $*$-isomorphism from the algebra $C^{\infty}\left(\mathbb{T}^2_{p/q}\right)$  to the algebra $\mathcal{A}_{p/q}=\left(C^{\infty}(\mathbb{T}^2)\otimes M_q\right)^{\kappa\otimes\rho}$.

\end{lemma}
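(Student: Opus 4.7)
The plan is to check that $u := z_1\otimes S$ and $v := z_2\otimes R^{-1}$ are unitaries in $C^{\infty}(\mathbb{T}^2)\otimes M_q$ satisfying $uv = \lambda vu$, so that $T$ extends by the universal property to a $*$-homomorphism from the algebra of noncommutative polynomials in $U, V$; to extend this further to the smooth algebra by a convergence argument; to verify that the image lands in $\mathcal{A}_{p/q}$; and finally to prove bijectivity.

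Unitarity is immediate, and the twisted commutation reduces, since $z_1z_2=z_2z_1$, to $SR^{-1}=\lambda R^{-1}S$, which is a direct rearrangement of $RS=\lambda SR$. The Schwartz-type bound \eqref{bdda} then ensures that
\begin{equation*}
T(a)=\sum_{m,n\in\mathbb{Z}} a_{mn}\, z_1^m z_2^n\otimes S^m R^{-n}
\end{equation*}
converges in every Fr\'echet seminorm of $C^{\infty}(\mathbb{T}^2)\otimes M_q$, because any such seminorm grows at most polynomially in $m, n$ while $|a_{mn}|$ decays faster than any polynomial. Invariance under $\kappa\otimes\rho$ need only be verified on the generators, and for $u$ it boils down to $R^m S R^{-m}=\lambda^m S$, while for $v$ it reduces to $S^n R^{-1}S^{-n}=\lambda^n R^{-1}$; both are consequences of $RS=\lambda SR$.

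For injectivity, write $m = qm_1+m_0$ and $n = qn_1+n_0$ with $0\le m_0, n_0 < q$, and note that the family $\{S^{m_0}R^{-n_0}\}_{0\le m_0, n_0<q}$ is a linear basis of $M_q$ (and that $S^q=R^q=\mathbbm{1}$). Thus if $T(a)=0$, then for each pair $(m_0, n_0)$ the double Fourier series $\sum_{m_1, n_1} a_{qm_1+m_0,\,qn_1+n_0}\, z_1^{qm_1+m_0} z_2^{qn_1+n_0}$ must vanish on $\mathbb{T}^2$, forcing all $a_{mn}=0$. Surjectivity is the main obstacle: the cleanest route is to invoke Lemma \ref{ia1} through the natural bijection $\varphi\mapsto\xi_\varphi$ introduced after \eqref{stid}, which identifies $\mathcal{A}_{p/q}$ with $\Gamma^{\infty}(F)$ while sending $u, v$ to $\xi_U, \xi_V$, so that $T$ coincides with the composition of this identification with $Q$ and is therefore bijective. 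Alternatively one can expand a general $\varphi\in\mathcal{A}_{p/q}$ in the basis $\{S^{m_0}R^{-n_0}\}$; the equivariance condition \eqref{kapparho} then forces each coefficient function to transform as $f_{m_0,n_0}(\lambda^a z_1,\lambda^b z_2)=\lambda^{am_0+bn_0}f_{m_0,n_0}(z_1,z_2)$, hence to factorize as $z_1^{m_0}z_2^{n_0}$ times a smooth function of $z_1^q$ and $z_2^q$, which can be Fourier-expanded and matched term by term with $u^m v^n$.
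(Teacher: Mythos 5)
Your proposal is correct and follows essentially the same route as the paper: the paper likewise verifies that $u,v$ are unitary, $\kappa\otimes\rho$-invariant and satisfy the $\lambda$-exchange rule, invokes the decay of Fourier coefficients for the extension, and proves surjectivity exactly by your ``alternative'' argument, writing a general invariant element as $\sum_{r,s=0}^{q-1} f_{rs}\,u^r v^s$ and observing that invariance of $u,v$ forces each $f_{rs}$ to be $\kappa$-invariant, so $\{u^mv^n\}_{m,n\in\mathbb Z}$ spans $\mathcal{A}_{p/q}$. Your explicit injectivity check via the linear independence of $\{S^{m_0}R^{-n_0}\}$ in $M_q$ spells out a point the paper leaves to the reader, and the shortcut through Lemma \ref{ia1} and the identification $\varphi\mapsto\xi_\varphi$ is also consistent with the paper's setup.
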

\begin{proof}

By straightforward check using the properties of the Fourier coefficients of a smooth function and noting that $z_1\otimes S$ and $z_2\otimes R^{-1}$ are unitary and $\kappa\otimes\rho$-invariant, and satisfy the $\lambda$-exchange rule.

Next, it is easily seen that any element in $\mathcal{A}_{p/q}$
can be written as:\\
\begin{equation}\label{alt}
\sum_{r,s=0}^{q-1} f_{rs}(z_1,z_2)u^r v^s,
\end{equation}
where, for any $(r,s)\in (\mathbb{Z}/q)^2$,
$f_{rs}(z_1,z_2)$
are Schwarz functions on ${\mathbb{T}}^2$.
Since $u$ and $v$ are invariant such an element is
$\kappa\otimes\rho$-invariant if and only if
each $f_{rs}$ is $\kappa$-invariant, that is defines a function on $\check{\mathbb{T}}^2$.
Thus the set $\{u^mv^n\}$ for $m,n \in \mathbb{Z}$ is a basis of $\mathcal{A}_{p/q}$ over
$C^{\infty}\left(\check{\mathbb{T}}^2_{p/q}\right)$.
This shows surjectivity of $T$ and concludes the proof.

\end{proof}

\noindent
  Remark. Analogously to what has been observed for the first isomorphic spectral triple, also $Z(\mathcal{A}_{p/q})$ is isomorphic to $C^\infty(\check{\mathbb{T}}^2)$  and then  $C^\infty(\mathbb{T}^2_{p/q})$ and $\mathcal{A}_{p/q}$ are  isomorphic as left modules over  $C^\infty(\check{\mathbb{T}}^2)$ $\diamond $ \\

\noindent
$\bullet$ The Hilbert space $\mathcal{H}_{p/q}$.\\
\hspace*{6mm}
Let $\mathcal{H}^0_{p/q}:=\left(L^2(\mathbb{T}^2)\otimes M_q\right)^{\kappa\otimes\rho}$
be the Hilbert subspace of invariant elements in
$L^2(\mathbb{T}^2)\otimes M_q$
under the (extension of the bounded) action of the tensor product representation $\kappa\otimes\rho$ of $G$,
which also is the same as the obvious completion of $\mathcal{A}_{p/q}$.
For the spectral triple on $\mathcal{A}_{p/q}$ we take as Hilbert space
\begin{equation}
\mathcal{H}_{p/q}=\mathcal{H}^+_{p/q}\oplus \mathcal{H}^-_{p/q}=\mathcal{H}^0_{p/q}\otimes \mathbb{C}^2,
\end{equation}
where the apex $+$ and $-$ are just to mark which copy of $\mathcal{H}^0_{p/q}$ is in the $\pm1$ eigenspace of the grading operator $\gamma_{p/q}=$diag$(1,-1)$ Then,
taking advantage of the (inverse) isomorphism $T$ given by \eqref{T2} and its Hilbert space amplification, it is clear that:
 \begin{lemma}\label{im2}
The Hilbert modules
$ \left(\mathcal{A}_{p/q},\mathcal{H}_{p/q}\right)$
and $ \left( C^\infty(\mathbb{T}^2_{p/q}), L^2(\mathbb{T}^2_{p/q})\otimes \mathbb{C}^2\right)$ are ${*-{\rm isomorphic}}$.
\end{lemma}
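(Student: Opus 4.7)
My plan is to lift the algebra isomorphism $T$ of Lemma \ref{lemmaiso} to a unitary operator between the two Hilbert spaces by showing that $T$ is isometric with respect to the natural trace-induced inner products. Since both representations are by left multiplication, the intertwining relation \eqref{diag1} (or equivalently \eqref{diag2}) will then follow almost automatically from the fact that $T$ is a $*$-homomorphism.

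Concretely, first I would equip $\mathcal{A}_{p/q}$ with the tracial state
\begin{equation*}
\mathfrak{t}_{\mathcal{A}}\left(\sum_{r,s=0}^{q-1} f_{rs}\, u^r v^s\right) = \int_{\mathbb{T}^2} f_{00},
\end{equation*}
obtained by restricting to $\mathcal{A}_{p/q}$ the tensor product of the normalized integral on $\mathbb{T}^2$ with the normalized trace on $M_q$. The key calculation is to check that $\mathfrak{t}_{\mathcal{A}}\circ T = \mathfrak{t}$, which reduces to verifying the claim on the generating monomials $U^m V^n$. Under $T$, the monomial $U^m V^n$ goes (up to a power of $\lambda$) to $z_1^m z_2^n\otimes S^m R^{-n}$; since the normalized matrix trace of $S^mR^{-n}$ vanishes unless both $m$ and $n$ are divisible by $q$, while for such $m,n$ the function $z_1^m z_2^n$ integrates to $0$ unless $m=n=0$, the combined trace is $\delta_{m,0}\delta_{n,0}$, matching the value of $\mathfrak{t}$ on $U^m V^n$.

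From the trace preservation and the fact that $T$ is a $*$-homomorphism, we obtain
\begin{equation*}
\langle T(a)\mid T(b)\rangle_{\mathcal{A}} = \mathfrak{t}_{\mathcal{A}}\bigl(T(a)^* T(b)\bigr) = \mathfrak{t}_{\mathcal{A}}\bigl(T(a^*b)\bigr)=\mathfrak{t}(a^*b)=\langle a\mid b\rangle,
\end{equation*}
so $T$ is isometric on the pre-Hilbert completions and extends uniquely to a unitary $\mathcal{T}_0:L^2(\mathbb{T}^2_{p/q})\to\mathcal{H}^0_{p/q}$. The full unitary $\mathcal{T}:=\mathcal{T}_0\otimes\mathrm{id}_{\mathbb{C}^2}$ then maps $L^2(\mathbb{T}^2_{p/q})\otimes\mathbb{C}^2$ onto $\mathcal{H}_{p/q}=\mathcal{H}^0_{p/q}\otimes\mathbb{C}^2$.

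It remains to verify the intertwining condition for the representations. Since the representation on either side is by diagonal left multiplication, and $T$ is an algebra $*$-homomorphism, one has $\mathcal{T}\,(\pi_{p/q}(a)(b\otimes\xi)) = \mathcal{T}(ab\otimes\xi)=T(a)T(b)\otimes\xi = \pi_{\mathcal{A}}(T(a))\mathcal{T}(b\otimes\xi)$ on the dense pre-Hilbert subspace, and this extends by continuity. The only mild subtlety is the identification of $\mathcal{H}^0_{p/q}$ with the completion of $\mathcal{A}_{p/q}$, which holds because the $\kappa\otimes\rho$-invariant subspace of $L^2(\mathbb{T}^2)\otimes M_q$ is closed and contains $\mathcal{A}_{p/q}$ as a dense subalgebra; I expect this to be the only step requiring some care, and it follows at once from averaging over the finite group $G$.
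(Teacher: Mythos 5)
Your argument is correct and follows the same route the paper takes: the paper simply asserts that the result is "clear" from the Hilbert-space amplification of the isomorphism $T$ of Lemma \ref{lemmaiso}, and your proposal supplies exactly the details behind that assertion — trace preservation on the monomials $U^mV^n$ (via $\mathrm{tr}(S^mR^{-n})=0$ unless $q\mid m$ and $q\mid n$), the resulting isometry and unitary extension, the intertwining of the left regular representations, and the identification of $\mathcal{H}^0_{p/q}$ with the completion of $\mathcal{A}_{p/q}$ by averaging over $G$. No gaps.
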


\noindent$\bullet$ The Dirac operator $\mathcal{D}_{p/q}$.

Now we are going to select the Dirac operator $\mathcal{D}_{p/q}$ on $\mathcal{H}_{p/q}$ in such a way that
\eqref{diagD} is satisfied for $D_1=D_\theta$ and $D_2=\mathcal{D}_{p/q}$.
%the diagram \eqref{diagD} commute.
For sake of simplicity, we start with Weyl spinors
%, say left handed,
 (of grade +1) on the noncommutative torus.
By linearity, it is enough to check \eqref{diagD}
on each vector $u^mv^n= T(U^mV^n)$ of the basis, which in view of
\begin{equation}
\left(\delta_1 +i\delta_2\right)U^mV^n=i\left(m+in\right) U^mV^n
\end{equation}
%fixes $\mathcal{D}_{p/q}^+$ to be
requires that
\begin{equation}
\begin{split}
\mathcal{D}_{p/q}
u^mv^n\left(\begin{matrix} 1 \\  0\end{matrix}\right)
 & = i\left(m+in\right) u^m v^n \left(\begin{matrix} 0 \\  1\end{matrix}\right)
\\& =  i\left( m+in \right) \left(z^m\otimes R^m\right)\left(w^n\otimes S^n\right)\left(\begin{matrix} 0 \\  1\end{matrix}\right)
\\& = i\left( \partial_1 +i \partial_2 \right)\otimes 1 \left(z^m w^n\otimes R^m S^n\right)\left(\begin{matrix} 0 \\ 1\end{matrix}\right).
\end{split}
\end{equation}
Thus the appropriate Dirac operator on $\mathcal{H}_{p/q}$ reads
(modulo exchange of the tensor factors in $M_q\otimes \C^2$):
\begin{equation}\label{D2}
\mathcal{D}_{p/q}=\slashed{D}\otimes \mathbbm{1}_q,
\end{equation}
that has
%Notice that \eqref{D2} is of
the usual product form \eqref{product} though with a vanishing second term and acting not on the full tensor product of Hilbert spaces but only on its subspace.\\

\noindent$\bullet$ The isomorphism.\\
Similarly to the treatment of the first isomorphic spectral triple, by means of \eqref{lemmaiso} and \eqref{im2} and the above discussion, we obtain:
\begin{prop}
The spectral triple $\left(\mathcal{A}_{p/q},\mathcal{H}_{p/q},\mathcal{D}_{p/q}\right)$
is isomorphic to the standard spectral triple
$\left( C^\infty(\mathbb{T}^2_{p/q}), L^2(\mathbb{T}^2_{p/q})\otimes \mathbb{C}^2, D_{p/q}\right)$,
where $D_{p/q}$ is given by \eqref{Dirac} for a fractional $\theta=p/q$.
\end{prop}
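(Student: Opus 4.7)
\medskip

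\noindent\textbf{Proof proposal.}
The strategy is to assemble the algebraic and Hilbert-space pieces that have already been established in Lemmata \ref{lemmaiso} and \ref{im2} and then verify the remaining Dirac-operator condition \eqref{diagD} of Definition \ref{diags}. More precisely, I would take $\gamma := T : C^\infty(\mathbb{T}^2_{p/q}) \to \mathcal{A}_{p/q}$ as the $*$-isomorphism of algebras produced by Lemma \ref{lemmaiso}, and then use the unitary amplification of $T$ between $L^2(\mathbb{T}^2_{p/q})\otimes\mathbb{C}^2$ and $\mathcal{H}_{p/q}$ supplied by Lemma \ref{im2}; condition \eqref{diag1} is then automatic because both representations are by left multiplication and $\gamma$ is an algebra map. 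Thus only \eqref{diagD} remains, and this is what motivated the definition \eqref{D2} in the first place.

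For the intertwining of the Dirac operators I would proceed by working on the orthonormal basis $\{U^m V^n \otimes e_\pm\}_{(m,n)\in\mathbb{Z}^2}$ of $L^2(\mathbb{T}^2_{p/q})\otimes\mathbb{C}^2$, where $e_\pm$ span $\mathbb{C}^2$. Under $T$ this basis goes to $\{u^m v^n \otimes e_\pm\} = \{(z_1^m z_2^n)\otimes (R^m S^n)\otimes e_\pm\}$ inside $\mathcal{H}_{p/q}$ (up to the scalar reordering factors coming from $RS=\lambda SR$, which cancel between the two sides of the identity to be verified). Using
\begin{equation}
(\delta_1\pm i\delta_2)(U^mV^n)= i(m\pm in)\,U^mV^n,\qquad (\partial_1\pm i\partial_2)(z_1^m z_2^n) = i(m\pm in)\,z_1^m z_2^n,
\end{equation}
the computation sketched in the text for the $+$ component extends verbatim to the $-$ component by swapping the off-diagonal entry of $D_{p/q}$, and one obtains $T D_{p/q}(U^m V^n \otimes e_\pm) = \mathcal{D}_{p/q}\,T(U^m V^n \otimes e_\pm)$ on each basis vector. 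Density of the algebraic span in $L^2(\mathbb{T}^2_{p/q})\otimes\mathbb{C}^2$ and the self-adjointness of both $D_{p/q}$ and $\mathcal{D}_{p/q}$ on their respective Sobolev-type cores then promote this to an identity of closed operators.

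The main subtlety I anticipate is checking that $\mathcal{D}_{p/q}=\slashed{D}\otimes\mathbbm{1}_q$, a priori defined on the ambient space $L^2(\mathbb{T}^2)\otimes M_q \otimes \mathbb{C}^2$, genuinely restricts to a (densely defined, essentially self-adjoint) operator on the invariant subspace $\mathcal{H}_{p/q}$. This follows because the translation-invariant operator $\slashed{D}$ commutes with the action of $\kappa$ on $L^2(\mathbb{T}^2)\otimes\mathbb{C}^2$, while $\mathbbm{1}_q$ trivially commutes with $\rho$; hence $\slashed{D}\otimes\mathbbm{1}_q$ commutes with $\kappa\otimes\rho$ and preserves its fixed-point subspace, where it inherits self-adjointness and compact resolvent from $\slashed{D}$. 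Once this is in place, the spectral triple axioms transfer through $T$ from $(C^\infty(\mathbb{T}^2_{p/q}),L^2(\mathbb{T}^2_{p/q})\otimes\mathbb{C}^2,D_{p/q})$, and the isomorphism in the sense of Definition \ref{diags} is established.
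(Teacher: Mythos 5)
Your proposal is correct and follows essentially the same route as the paper: the algebra isomorphism of Lemma \ref{lemmaiso}, the Hilbert-space unitary of Lemma \ref{im2}, and the verification of \eqref{diagD} on the monomial basis via the eigenvalue identity for $\delta_1\pm i\delta_2$, which is exactly the computation the paper displays before defining $\mathcal{D}_{p/q}=\slashed{D}\otimes\mathbbm{1}_q$. Your additional remarks (extension to the $-$ component, density/self-adjointness, and the check that $\slashed{D}\otimes\mathbbm{1}_q$ preserves the $\kappa\otimes\rho$-invariant subspace) only make explicit points the paper leaves implicit.
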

Furthermore this isomorphism becomes an isomorphism of even real
spectral triples if we equip $\left(\mathcal{A}_{p/q},\mathcal{H}_{p/q},\mathcal{D}_{p/q}\right)$ with the grading
$\gamma_{p/q}$ as above and a real structure satisfying \eqref{diagJ} given by the $\C-$antiunitary operator
\begin{equation}\label{rssecond}
\mathcal{J}_{p/q}= -i\mathcal{J}^0_{p/q}\otimes (\sigma_2 \circ c.c.),
\end{equation}
where $\mathcal{J}^0_{p/q}$ acts by component-wise conjugation:
\begin{equation}
\mathcal{J}^0_{p/q}\left(f\otimes A\right)=\overline{f}\otimes A^*\ .
%, \quad \forall \; f\otimes A \in \mathcal{A}_{p/q}.
\end{equation}
%making the diagram \eqref{diagJ} commute.

\section{Inequivalent spin structures}\label{Inequivalent spin structures}
On the noncommutative torus the inequivalent spin structures correspond in a natural manner to double coverings.
This is most easily seen classically since $\T^2$
is parallelizable. Thus the structure group of its bundle of oriented orthonormal frames can be reduced to the trivial (one element) group,
and so the total space of such a reduced bundle is
just a copy of $\T^2$ itself, with a projection
on the base being the identity map.
Correspondingly also the whole spin structure can be reduced: the structure group $Spin(2)$ to its
two element subgroup $\Z_2$, the total space
 of the principal $Spin(2)$-bundle to a double cover of $\T^2$ and the spin structure map
 to the double covering map.
 The fully fledged (non reduced) spin structure can be reconstructed from the double cover as the bundle associated with the natural action of $\Z_2$ on $Spin(2)$ (as a subgroup).
It is also a matter of straightforward checking that
two such spin structures are equivalent precisely
iff the double covering are equivalent.

   Furthermore, in this reduced setting the Weyl spinors are just sections of the bundle associated with the faithful representation of $\Z_2$ on $\C$, or equivalently $\Z_2$-equivariant functions maps from the double cover to $\C$, or what is the same, (-1)-eigenfunctions of the generator of $\Z_2$. Then of course the Dirac spinors are just two copies of Weyl spinors.

All that makes sense also in the noncommutative realm by working dually in terms of algebras.
 The appropriate language is actually that of
 noncommutative double coverings, interpreted as
noncommutative principal $\Z_2$-bundles.
For our purposes this will essentially mean that we consider $C^*$ algebras that contain
$C^\infty\left(\mathbb{T}^2_{p/q}\right)$ as a subalgebra of index $2$.
More precisely we formulate it as follows.

\begin{mydef}\label{cover}
Let $A$ and $B$ be unital $C^*$ algebras. We say that $B$ is a noncommutative double covering of $A$ if $B$ is a graded algebra $B=B^0\oplus B^1$, such that 
%$B^0B^1=B^1$, 
the closure of $B^1B^1$ contains the unit of $B$, and $B^0$ is isomorphically identified with $A$.
Two double noncommutative coverings $B$ and $B'$ of $A$ are said to be equivalent iff there is a {$*-$isomorphism}
from  $B$ to $B'$ that restricts to identity on $A$.
\end{mydef}

This definition extends easily to suitable pre-$C^*$ algebras of $B$ and of $A$, and in particular to the case of smooth noncommutative torus.

\subsection{Inequivalent double coverings}\label{Inequivalent double coverings}
The  double coverings of noncommutative torus
 have been studied in \cite{D}
and just like in the classical case,
there exists four inequivalent of them,
and thus four inequivalent spin structures for arbitrary parameter $\theta$.
They are labelled by a pair of indices $j,k$
that take the values $0$ or $1$ and
the concrete algebras of the double covers  $C^\infty((\widetilde\Tb^2_{j,k})_\te)$
%that
are shown in the first column of the Table\,1 %\eqref{table}
below. The corresponding embedding homomorphisms
$$
h_{j,k}: C^\infty((\widetilde\Tb^2_{j,k})_\te)\hookleftarrow C^\infty(\Tb^2_\te)$$ of
$C^\infty(\Tb^2_\te)$ as subalgebras of index 2, send the generators
$U_{\te},V_{\te}\in C^\infty(\Tb^2_\te)$
to the elements listed respectively in the third and fourth column, where we also introduce a label on the generators
to indicate the parameter of the corresponding noncommutative torus.\\
\centerline {Table 1.}
\begin{equation}\label{table}
\begin{array}{|c||c|c|c|c|c|c|c|c|}
 \hline
% \vspace{1mm}
j, k                      &C^\infty\left((\widetilde\Tb^2_{j,k})_\te\right)&h_{j,k}(U_{\te})&h_{j,k}(V_{\te})\\ \hline\hline
0,0             &C^\infty\left(\Tb^2_\te\right) \otimes \bC^2
&U_{\te}\otimes{_1\choose^1}&V_{\te}\otimes{_1\choose^1}\\
\hline
1,0    &C^\infty\left(\Tb^2_\frac{\te}{2}\right)&U^{\,2}_{\frac{\!\te}{2}}&V_{\frac{\te}{2}}\\
\hline
0,1    &C^\infty\left(\Tb^2_\frac{\te}{2}\right)&U_{\frac{\te}{2}}&V^{\,2}_{\!\frac{\!\te}{2}}\\
\hline
1,1    & C^\infty\left(\Tb^2_\frac{\te}{4}\right)^{\bZ_2'}&
U_{\!\frac{\te}{4}}^{\,2} & V_{\frac{\!\te}{4}}^{\,2} \\  \hline
\end{array}
\end{equation}
In the last row the generator of $\bZ_2'$ acts by
$$
U_{\frac{\te}{4}}^m V_{\frac{\te}{4}}^n \mapsto
(-)^{m+n} U_{\frac{\te}{4}}^m V_{\frac{\te}{4}}^n
$$
and thus the generators of $\bZ_2'$-invariant subalgebra are $U_{\frac{\te}{4}}^m V_{\frac{\te}{4}}^n$ with $m+n$ even.

It is not difficult to see that the isomorphic images of $C^\infty(\Tb^2_\te)$
under the embeddings $
h_{j,k}$ are the subalgebras of $\bZ_2$-fixed elements
$$
C^\infty((\widetilde\Tb^2_{j,k})_\te)^{\bZ_2} = C^\infty(\Tb^2_\te)\ ,
$$
 where the generator of $\bZ_2$ acts by
$$
\begin{array}{c}
a\otimes {_w\choose^z}\mapsto a\otimes {_z\choose^w}, ~{\rm if}~ j,k=0,0 \ , \\
\\
%$$
%$$
U^m_{\frac{\te}{2}} V^n_{\frac{\te}{2}}\mapsto
(-)^m U^m_{\frac{\te}{2}} V^n_{\frac{\te}{2}},
~{\rm if}~ j,k=1,0 \ , \\
\\
%$$
%$$
U^m_{\frac{\te}{2}} V^n_{\frac{\te}{2}}\mapsto
(-)^n U^m_{\frac{\te}{2}} V^n_{\frac{\te}{2}},
~{\rm if}~ j,k=0,1 \ , \\
\\
%$$
%$$
U_{\frac{\te}{4}}^m V_{\frac{\te}{4}}^n \mapsto
(-)^{mn} U_{\frac{\te}{4}}^m V_{\frac{\te}{4}}^n,
~{\rm if}~ j,k=1,1\ ,
\end{array}
$$
where in the last case $m+n$ is even.
Note that although in the maximally twisted case the fourth root of
$\lambda$ is involved in $ C^\infty(\Tb^2_\frac{\te}{4})$,
only the square root of $\lambda$ really matters in $C^\infty(\Tb^2_\frac{\te}{4})^{\bZ_2'}$.
Anyhow a kind of `transmutation' occurs: the more twisted the spin structure is,
the more commutative parameter $\lambda$ is involved,
namely $\lambda$, $\lambda^{1/2}$, $\lambda^{1/4}$.\\

It is easy to find pairs of odd elements such that their product is the unit of $B$,
so that Definition\,\ref{cover} is satisfied by 
the four double coverings (so the spin structures) described above.
Furthermore we have:
%It turns out also that they are pairwise inequivalent.
% in the sense of Definition\,\ref{cover}:
\begin{prop}\label{inequiv}
The noncommutative double coverings $C^\infty((\widetilde\Tb^2_{j,k})_\te)$ are pairwise inequivalent.
\end{prop}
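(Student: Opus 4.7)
The plan is to distinguish the six pairs using two kinds of invariants: the underlying $C^*$-algebra itself for any pair involving the trivial covering $(0,0)$, and a $K_1$-theoretic obstruction coming from the identity-on-$A$ requirement for the remaining three pairs.

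First, $(0,0)$ has $B=C^\infty(\Tb^2_\te)\otimes\bC^2$, which contains the nontrivial central idempotent $1\otimes(1,0)$. In contrast, the algebras in $(1,0)$, $(0,1)$, $(1,1)$ are rotation-algebra type objects whose centers are either $\bC$ or $C^\infty(\check\Tb^2)$, depending on whether the relevant parameter is irrational or rational, and neither admits projections other than $0$ and $1$. Hence $(0,0)$ is not $*$-isomorphic as a $C^*$-algebra to any of the other three, let alone equivalent as a double covering.

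For the remaining pairs I would combine the identity-on-$A$ condition with the explicit rows of the table~\eqref{table} to force $\phi$ to satisfy an equation of the form $\phi(Z)^2=W$, where $Z$ is a generator on the $B$-side and $W$ is either a generator or the square of a generator on the $B'$-side. Specifically, $(1,0)$ vs $(0,1)$ gives $\phi(U_{\te/2})^2=U_{\te/2}$ inside $C^\infty(\Tb^2_{\te/2})$; $(1,0)$ vs $(1,1)$ gives $\phi(U_{\te/2})^2=U_{\te/4}^2$ inside $B'=C^\infty(\Tb^2_{\te/4})^{\bZ_2'}$; and $(0,1)$ vs $(1,1)$ gives $\phi(V_{\te/2})^2=V_{\te/4}^2$ in the same $B'$. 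Each asserts that a specified unitary admits a square root in the target algebra.

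I would then rule out each equation by $K_1$. For any real $\alpha$, $K_1(C^\infty(\Tb^2_\alpha))=\bZ^2$ is torsion-free with generators $[U_\alpha],[V_\alpha]$ (Pimsner--Voiculescu in the irrational case, Morita equivalence with $C(\Tb^2)$ in the rational case). For the first pair this immediately kills $2[X]=(1,0)$. For the other two I would pass via the inclusion $B'\hookrightarrow C^\infty(\Tb^2_{\te/4})$: torsion-freeness forces the class of an alleged square root of $U_{\te/4}^2$ or $V_{\te/4}^2$ to equal $(1,0)$ or $(0,1)$, but the image of $K_1(B')\to K_1(C^\infty(\Tb^2_{\te/4}))$ sits in the index-two sublattice $\{(m,n):m+n\in 2\bZ\}$. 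The main technical point is this last image computation, for which I would identify $B'$ with a noncommutative torus generated by $a=U_{\te/4}^2$ and $c=U_{\te/4}V_{\te/4}$ satisfying $ac=e^{-\pi i\te}ca$ (with $V_{\te/4}^2$ recoverable from $c^2$ and $a$), so that $K_1(B')=\bZ^2$ is generated by $[a],[c]$, whose images $(2,0),(1,1)$ already span the stated sublattice.
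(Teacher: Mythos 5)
Your argument is correct, but it proceeds by a genuinely different route than the paper. The paper's proof is equivariance-theoretic: it lifts the canonical two-parameter group of automorphisms of $C^\infty(\Tb^2_\te)$ to each covering algebra, observes that these lifts realize the four classical inequivalent double covers of the group $\Tb^2$ (detected on the odd part $\mathcal{C}^1_{j,k}$), and notes that an equivalence of the noncommutative coverings would induce an equivalence of these classical covers. That argument is short and uniform in $(j,k)$, but it leaves the existence and naturality of the lifts as a "straightforward check". Your proof instead uses intrinsic $C^*$-algebraic invariants: a nontrivial central projection separates $(0,0)$ from the rest, and for the remaining three pairs the identity-on-$A$ condition forces a unitary square-root equation that you kill in $K_1$, using torsion-freeness of $K_1(C^\infty(\Tb^2_\alpha))\cong\bZ^2$ together with the computation that the image of $K_1\bigl(C^\infty(\Tb^2_{\te/4})^{\bZ_2'}\bigr)$ in $K_1\bigl(C^\infty(\Tb^2_{\te/4})\bigr)$ is the index-two sublattice spanned by $(2,0)$ and $(1,1)$; your identification of the $\bZ_2'$-fixed algebra as a rotation algebra on $a=U_{\te/4}^2$ and $c=U_{\te/4}V_{\te/4}$ is the right technical step (the sign in the exchange relation $ac=e^{\pm\pi i\te}ca$ depends on convention and is immaterial). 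What your approach buys is a self-contained, grading-free argument that only invokes the standard Pimsner--Voiculescu computation of $K_1$ of rotation algebras (valid for all $\te$, rational or not, and inherited by the smooth, spectrally invariant subalgebras); what it costs is the uniformity and geometric transparency of the paper's reduction to the classical classification of double covers of $\Tb^2$.
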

\begin{proof}
Consider the torus group $\Tb^2$ identified with the two-parameter group of automorphisms of $C^\infty(\Tb^2_\te)$
$$U^mV^n\mapsto w^mz^nU^mV^n, \quad (w,z)\in \Tb^2.$$
It is straightforward to check that the lifts of $\Tb^2$
as two-parameter group of automorphisms of $C^\infty((\widetilde\Tb^2_{j,k})_\te)$ form
four inequivalent usual double coverings $\widetilde\Tb^2_{j,k}$ of $\Tb^2$.
Namely, even though $\widetilde\Tb^2_{j,k}$ are all isomorphic as groups to $\Tb^2$, the covering maps are inequivalent for different value of $(j,k)$:
$$
\Tb^2\ni (w,z)\mapsto (w^j,z^k)\in \Tb^2.
$$
This can be seen most easily on the odd part
$C^\infty((\widetilde\Tb^2_{j,k})_\te)^1$ of the algebra $C^\infty((\widetilde\Tb^2_{j,k})_\te)$.
However, these double covers of $\Tb^2$
 would be equivalent if the noncommutative double coverings in question were equivalent, which concludes the proof.
\end{proof}

We we note in passing that the inequivalent double coverings of the group $\Tb^2$ regarded as lifts to inequivalent spin structures of the canonical action of $\Tb^2$ on $\Tb^2$  appeared already in \eqref{D0}.
%}

\subsection{Inequivalent spectral triples}\label{Inequivalent spectral triples}
Now we shall construct a spectral triple for each spin structure.
We simplify our notation as follows: given a covering $C^\infty((\widetilde\Tb^2_{j,k})_\te)$ we shall denote it as $\mathcal{C}_{j,k}$ and denote its even and odd part respectively as $\mathcal{C}^0_{j,k}$ and $\mathcal{C}^1_{j,k}.$
 In analogy to the commutative case, Dirac operators of these spin structures are obtained by lifting the Dirac operator on the base space $\mathbb{T}^2_\te$.\\

\noindent$\bullet$ Algebra.\\
 For every spectral triple as its algebra datum we take the even part $\mathcal{C}^0_{j,k}\approx
 C^\infty\left(\mathbb{T}^2_{\theta}\right)$

of the algebra of functions on the covering space.
  Instead, as discussed above, the smooth Dirac spinors are direct sum of smooth Weyl spinors, which are just
%anti-invariant
those elements
of $C^\infty((\widetilde\Tb^2_{j,k})_\te)$ that change sign under the action the generator of $\bZ_2$, that is
  the elements of the odd part $\mathcal{C}^1_{j,k}$.
 %  will play the role of smooth Weyl spinors.
 Therefore the space of smooth (two-component) Dirac spinors is just $\mathcal{C}^1_{j,k}\otimes \mathbb{C}^2.$\\

\noindent$\bullet$ Hilbert space.\\
Next to obtain a suitable Hilbert space we use the completion
$\overline{\mathcal{C}_{j,k}}$
of
$\mathcal{C}_{j,k}$ with respect to the norm
\begin{equation}\label{nss}
\mathfrak{t}\left(\sum_{(m,n)\in \mathbb{Z}^2}a_{mn}U^{m}V^{n}\right)=a_{00}
\end{equation}
and take as square integrable Weyl spinors the elements of its  odd part $\overline{\mathcal{C}^1_{j,k\,}}$ while as the
Hilbert space of Dirac spinors we take
$$\mathcal{H}_{j,k}:=\overline{\mathcal{C}^1_{j,k\,}}
\otimes \mathbb{C}^2.$$

%\red{
 %
\noindent$\bullet$ Dirac operator.\\
To construct the Dirac operator $D_{j,k}$ we start by lifting the derivations on the noncommutative torus to its double coverings, or more precisely, extending
the derivations  $\delta_\ell$, $\ell=1,2$ to derivations  $\tilde{\delta}_\ell$ of the algebras $\mathcal{C}_{j,k\,}$.
 In other words,
we  require the commutativity of the diagram:
 \begin{equation}\label{ssD}
   \xymatrix{\mathcal{C}_{j,k} \ar[r]^{\tilde{\delta}_\ell} & \mathcal{C}_{j,k} \\
C^\infty\left(\mathbb{T}^2_{\theta}\right)\ar[r]^{\delta_\ell}  \ar@{^{(}->}[u] &C^\infty\left(\mathbb{T}^2_{\theta}\right).\ar@{^{(}->}[u]}\\
 \end{equation}
 As easily seen $\tilde{\delta}_\ell$ are nothing but
the usual derivations on the noncommutative tori with the
modified parameters ${\theta},{\theta/2},{\theta/2}$ and ${\theta/4}$, respectively as in Table\,\eqref{table}, times
 a factor $1/2$ every time the $\ell$th entry of the spin structure label $j_k$ equals to $1$.
Then, the action of $\tilde{\delta}_\ell$ on $\mathcal{C}^1_{j,k\,}$
extends to unbounded densely defined operators on the completions $\overline{\mathcal{C}^1_{j,k\,}}$
and (diagonally in $\C^2$) on the Hilbert spaces %$L^2(\mathbb{T}^2_\te)$ to
$\mathcal{H}_{j,k}$.\\

\noindent
Remark.
It is not difficult to see that these operators
are precisely the infinitesimal generators of the lifted two-parameter groups of automorphisms
forming the four inequivalent double coverings of $\Tb^2$ as in the Proof of Proposition\,\ref{inequiv}. $\diamond$\\

Next we contract these operators (extended derivations) with the Pauli matrices (Clifford multiplication) to get an operator which acts on Dirac spinors:
 \begin{equation}
 D_{j,k}=i\left(\sigma_1 \tilde{\delta}_1 + \sigma_2 \tilde{\delta}_2\right).
\end{equation}
In the following table in the first column we list all $D_{j,k}$'s in terms of the usual derivations $\delta_1$ and $\delta_2$ defined on each covering noncommutative torus $\mathcal{C}_{j,k}$ (with parameters as in \eqref{table}), while in the second column we report their respective spectra $Spec\left(D_{j_,k}\right)$ as operators on $\mathcal{H}_{j,k}:$

\centerline {Table 2.}
  \begin{equation}
\begin{array}{|c||c|c|c|c|c|c|c|c|}
 \hline
 %\vspace{1mm}
j, k   &D_{jk}& Spec\left(D_{j_,k}\right)\\
\hline\hline
0,0     & i\sigma_1 \delta_1+  i\sigma_2 \delta_2  & \pm \sqrt{m^2+n^2} \\ \hline
1,0    & \frac{i}{2}\sigma_1 \delta_1+ i\sigma_2 \delta_2  & \pm \sqrt{\left(m+\frac{1}{2}\right)^2+n^2} \\ \hline
0,1    & i\sigma_1 \delta_1+  \frac{i}{2} \sigma_2 \delta_2  & \pm \sqrt{m^2+\left(n+\frac{1}{2}\right)^2} \\ \hline
1,1    &  \frac{i}{2}\sigma_1 \delta_1+  \frac{i}{2}\sigma_2 \delta_2  & \pm \sqrt{\left( m + \frac{1}{2} \right)^2+\left( n+\frac{1}{2} \right)^2}  \\  \hline
\end{array}
\end{equation}
\\

As it should, the spectral triple for the first spin structure agrees with the one given in Section \ref{The standard spectral triple}, since in that case both the even and odd subspaces (of functions and of Weyl spinors) are isomorphic with the algebra
$C^{\infty}(\mathbb{T}^2_\te)$.
Note also that for all the four inequivalent spin structures $D_{j_,k}$ are isospectral deformations (have the same spectra) of the classical case $\theta=0$.

\subsection{Isomorphic spectral triples}\label{Isomorphic spectral triples}

We now assume that $\theta=p/q$ and study isomorphic images of these spectral triples under both of the isomorphisms presented in sections \ref{First isomorphic spectral triple} and \ref{Second isomorphic spectral triple}.

As just mentioned, the case $(j,k)=(0,0)$ is identical to what we told in previous sections. Furthermore, for all the spin structures the algebra of smooth functions is isomorphic to $C^\infty\left(\mathbb{T}^2_\te\right)$ and hence both to $\Gamma(F)$ and $\mathcal{A}_{\frac{p}{q}}.$ Thus we shall just take care of the Hilbert spaces and Dirac operators, in the three nontrivial cases when
%starting with the case
$(j,k)\neq (0,0).$

In the following we let
$$
\begin{array}{c}
G^{10}=G^{01}=\mathbb{Z}_{2q}\times \mathbb{Z}_{2q}, \quad \lambda^{10}=\lambda^{01}=e^{2 \pi i \frac{p}{2q}},
\\
G^{11}=\mathbb{Z}_{4q}\times \mathbb{Z}_{4q}, \quad \lambda^{11}=e^{2 \pi i \frac{p}{4q}}\ .
\end{array}
$$
We denote by $\check{\mathbb{T}}^2$ the quotient of $\mathbb{T}^2$ by the free action $\kappa$ of $G^{jk}$, given by $\kappa_{m,n}(z_1,z_2)=\left(\left(\lambda^{jk}\right)^m z_1,\left(\lambda^{jk}\right)^ nz_2\right),$ similarly as in section \ref{First isomorphic spectral triple}.

Now let $(j,k)=(1,0)$ and take $R,S \in M_{2q}$ as in \eqref{mRS}and define an action $\tau$ of $G^{10}$ on $\mathbb{T}^2\times M_{2q}$ given by
$$\tau_{m,n}(z_1,z_2, A)=(\lambda^m z_1,\lambda^nz_2, R^mS^nAS^{-n}R^{-m}),$$ where $A\in M_{2q}$. We let $F'$ to be the orbit space of $\tau,$ that is a vector bundle over $\check{\mathbb{T}}^2$, with typical fibre $M_{2q}.$ The space of functions $\mathcal{C}^+_{1,0}$ is regarded as the $*-$subalgebra of $\Gamma(F')$ generated by $\xi_U^2$ and $\xi_V,$ where
$$
\xi_U:\check{\mathbb{T}}^2\to F', \quad
[z_1,z_2]_\kappa\mapsto [z_1,z_2,z_1S]_\tau\ ,
$$
$$
\xi_V:\check{\mathbb{T}}^2\to F', \quad
[z_1,z_2]_\kappa\mapsto [z_1,z_2,z_2R^{-1}]_\tau\ .
$$
The odd subalgebra  $\mathcal{C}^-_{1,0}$ is isomorphic to the linear span of ${U^{2m+1}V^n}_{(m,n)\in \mathbb{Z}^2}$ with coefficients $f_{mn}$ which are smooth functions on the torus. The Hilbert space of Weyl spinors  is isomorphic with its closure with respect to the norm defined by the scalar product
\begin{equation}
\left(g,f\right)
=\sum_{(m,n)\in\mathbb{Z}^2}\int_{\check{\mathbb{T}}^2}\overline{g}_{-m-n}f_{mn}.
\end{equation}
The Hilbert space $\mathcal{H}_{1,0}$ of Dirac spinors is as usual a direct sum of two Hilbert spaces of Weyl spinors.

Finally, the Dirac operator $D_{1,0}$ is unitarily equivalent to:
\begin{equation}
D_{1,0}^F=i(\sigma_1\partial_{1}^F  + \sigma_2\partial_{2}^F ),
\end{equation}
with $\partial_{1}^F$ and $\partial_{2}^F$ defined as in \eqref{df}.

Concerning the second isomorphic spectral triple, we have to regard $\mathcal{A}_{\frac{p}{q}}$ as the subalgebra of $\mathcal{A}_{\frac{p}{2q}}$ generated by $u_{1,0}=z_1^2 \otimes S$ and $v_{1,0}=z_2 \otimes R^{-1},$ while the Hilbert space $\mathcal{H}_{10}$ is isomorphic to its orthogonal complement, completed with respect to the scalar product defined by
\begin{equation}
\left(g,f\right)= \sum_{(m,n)\in\mathbb{Z}^2 }
\overline{g}_{-2m-n-1}f_{2m+n+1}\ .
\end{equation}
Then the Dirac operator $D_{10}$ is unitarily equivalent to the restriction of $
\mathcal{D}_{p/q}=\slashed{D}\otimes \mathbbm{1}_q
$ to $\mathcal{H}_{10}\otimes \mathbb{C}^2$ (modulo the exchange of the tensor factors as in \eqref{D2}).

Both these descriptions are similar for the spin structure $(0,1)$ provided that one exchanges the roles of $U$ and $V$, while for the fourth spin structure $(1,1)$ one has to replace every $M_{2q}$ with $M_{4q}$ and repeat the constructions above.\\

%\ver{
\noindent
A few remarks are in order.\\
%In section ???
We recalled after \cite{D} the four inequivalent spin structures as double coverings of
the noncommutative torus for any (in particular rational) parameter $\theta$
in the sense of noncommutative principal bundles.
What is a precise relation to the notions and classification in \cite{PS} and \cite{V13} is however unclear.
In particular note however that in contrast with \cite{PS}
it is claimed in \cite{V13} that some of the four
spin structures can be equivalent precisely in the case of rational parameter $\theta$.
%}

\section{Curved rational noncommutative torus}\label{Curved rational noncommutative torus}

So far we established isomorphisms between spectral triples by selecting Dirac operators, both for the first and second case, which act trivially as the identity on the finite part of the respective Hilbert spaces.
In order to push further the analogy with almost commutative manifolds, hence with standard model of particles (\cite{vS},\cite{koen}),
it would be interesting to get a Dirac operator whose action on the "internal" degrees of freedom is non trivial.
In quantum field theory, internal degrees of freedom of a single, isolated fermion change whenever it moves in a space-time region with a gauge field (e.g. electromagnetic) whose field strength is different from zero. From a more mathematical point of view (\cite{vS}\cite{MDV},\cite{DVKM}), having a non zero field strength corresponds to consider a Dirac operator which contains a non flat connection acting on the finite part of the Hilbert space. Roughly speaking, to have a Dirac operator which changes internal degrees of freedom of particles, one should consider the one that describes the analogue of a curved geometry of the internal directions.

Thus we are going to investigate some curved geometries on the rational noncommutative torus in view of the realizations of the spectral triple on $\mathbb{T}^2_{p/q}$ as twined almost commutative spectral triple.

The study of curvature on noncommutative torus was initiated in \cite{CT}, where  conformal rescaling
 of the the standard Dirac operator are considered, that have been later generalized to arbitrary conformal class in \cite{FK}.
A different perspective, that we are going to adopt, can be found in \cite{DS1} and \cite{DS2}, where the kind of perturbations employed preserves the boundedness of the commutator of the Dirac operators with any element of the algebra
%$C^\infty\left(\mathbb{T}^2_{p/q}\right),$
and relies on the real structure $J$ of the noncommutative torus.

We introduce a wide class of perturbations that include the rescalings with a conformal factor in $JC^\infty\left(\mathbb{T}^2_{\theta}\right)J^{-1}$ and the transformations studied in \cite{DS1} and \cite{DS2}
\begin{equation}\label{cDirac1}
 D^{(k)}= i\sum_{j,\ell=1,2}
 %^{2}
 \sigma^jk_j^\ell\,\delta_\ell\, k'^\ell_j + h.c.\ ,
 \end{equation}
 where $k^\ell_j, k'^\ell_j\in J C^{\infty}\left(\mathbb{T}^2_{p/q}\right)J^{-1}$ for
 $j,\ell\in \{1,2\}$ are assumed such that $D^{(k)}$ has compact resolvent.
 With this choice of $k^\ell_j, k'^\ell_j$ the operator $D^{(k)}$ maintains the bounded commutators with the algebra elements; a choice of $k^\ell_j, k'^\ell_j$ from the algebra $JAJ$ would lead to a spectral triple with twisted commutators.

It is expected that for rational  $\theta= p/q$ the counterparts of $D^{(k)}$
%as in \eqref{cDirac1}
on $L^2(F)\otimes \mathbb{C}^2$ and $\mathcal{H}_{p/q}$ will present some non trivial action on the finite part of the respective Hilbert spaces.
To see if this is the case we consider in the following two simple examples the special case
\begin{equation}\label{rightconf}
D^k=i\sigma^1\delta_1+i\sigma^2 k\delta_2, \quad {\rm where} \quad 0<k\in JAJ.
\end{equation}
\begin{exa}\label{exa1}
Let $k=J(U+U^*)J^{-1}+t$, where $t>2$. The action of $D^k$ on the basis of left handed Weyl spinors is given by
\begin{equation}
D^k\left(\begin{matrix}
U^mV^n
\\0
\end{matrix}\right)= \left(\begin{matrix}
0
\\ D^{k^+}(U^mV^n)
\end{matrix}\right)\ ,
\end{equation}
where
\begin{equation}
D^{k^+}(U^mV^n)=mU^mV^n+n(\lambda U^{m+1}V^n+\lambda^{-1} U^{m-1}V^n+t U^mV^n)\ ,
\end{equation}
whose isomorphic image under a similarity with the map $Q$ given by \eqref{map}
is given by
\begin{equation}
D^{{k}^+}_F (\xi_U^m\xi_V^n )=m\xi_U^m\xi_V^n+n(\lambda \xi_U^{m+1}\xi_V^n+\lambda^{-1} \xi_U^{m-1}\xi_V^n+t \xi_U^m\xi_V^n).
\end{equation}
Hence the operator $D^{k^+}$ is unitarily equivalent to
\begin{equation}
D^{{k}^+}_F=i\partial^F_1+A\, \partial^F_2:L^2\left(F\right)\rightarrow L^2\left(F\right)\ ,
\end{equation}
where $A\in \Gamma^\infty(End(F))$
has local components given by
\begin{equation}
\widetilde A({z}_1,{z}_2)=\lambda {z}_1 S
+\lambda^{-1}{z}_1^{-1}S^{-1}+t\mathbbm{1}_q\ .
\end{equation}
\end{exa}

From this Example we see that in general
the operators $D^{(k)}_F$ on the Hilbert space
$L^2(F)\otimes \mathbb{C}^2$ and
$\mathcal{D}^{(k)}_{p/q}$ on $\mathcal{H}_{p/q}$,
 do not admit the decomposition $D_1\otimes \mathbbm{1}_q+\chi_1\otimes D_2$ required for twined almost commutative manifolds. However this is not a surprise since even classically a product or twined product manifolds need not have such a structure on the metric level. Indeed in the noncommutative setting both for the description of the spectral triple on the associated vector bundle $F$ and for the description as  subalgebra of $C^\infty(\mathbb{T}^2)\otimes M_q,$  twined almost commutative spectral triples have been modelled on the invariant elements of the respective algebras under some action of $\mathbb{Z}_q \times \mathbb{Z}_q$ which intertwines the external and internal degrees of freedom. Thus, for these spectral triples, we are not able to write every admissible Dirac operator by simply joining together a Dirac operator on the standard spectral triple of the commutative torus with a Dirac operator on the finite space $M_q.$
Next we use an element $k$ in the center of $C^{\infty}\left(\mathbb{T}^2_{p/q}\right)$
to transform the Dirac operator.

\begin{exa}\label{exa2}
Let $k=U^q+U^{-q}+t$, where $t>2$. We consider again the action of $D^k$ on a basis of left-handed Weyl spinor as in the previous example
%\ref{exa1}
and similar computations lead to
\begin{equation}
D_F^{{k}^+}\xi_U^m\xi_V^n=m\xi_U^m\xi_V^n+n( \xi_U^{m+q}\xi_V^n+ \xi_U^{m-q}\xi_V^n+t \xi_U^m\xi_V^n).
\end{equation}

Hence we see that, with the same exchange of tensor factors as in \eqref{D2}, $D^k$ is unitarily equivalent to
\begin{equation}
{D}^k_F=
i{\sigma^1\partial}^F_1  + A\,\sigma^2{\partial}^F_2
%\otimes \mathbbm{1}_2
:L^2(F)\otimes\mathbb{C}^2\rightarrow L^2(F)\otimes\mathbb{C}^2\ ,
\end{equation}
where $A\in \Gamma^\infty(End(F\otimes \C^2))$
has local components given by
\begin{equation}
\widetilde A({z}_1,{z}_2)= z_1^q +{z}_1^{-q}+t\ .
\end{equation}
\end{exa}

This Example shows that the situation is different if we assume that in \eqref{cDirac1}
the elements   $k^\ell_j$ and $ k'^\ell_j$ belong to the center $\mathcal{Z}_{p/q}$ of $C^{\infty}\left(\mathbb{T}^2_{p/q}\right)$
(so in fact to the center of $JC^{\infty}\left(\mathbb{T}^2_{p/q}\right)J^{-1}$ too).
We prove the following proposition for transformations \eqref{cDirac1} of the isomorphic spectral triple $\left(\Gamma(F),L^2(F)\otimes \mathbb{C}^2,D_F\right)$, the proof for the corresponding transformations of the spectral triple  $\left(\mathcal{A}_{p/q},\mathcal{H}_{p/q},\mathcal{D}_{p/q}\right)$ is similar.

\begin{prop}Let $k^\ell_j,\,k'^\ell_j\in
 \mathcal{Z}_{p/q}$ and let $\check{k}^\ell_j,\,\check{k}'^\ell_j\in C^\infty({\mathbb{T}}^2)^{G}$ be the corresponding elements via the composed isomorphism
$\mathcal{Z}_{p/q}=C^\infty\!\left(\check{\mathbb{T}}^2\right)
=C^\infty({\mathbb{T}}^2)^{G}$
that sends ${U}^q\mapsto z_1^q$ and
${V}^q\mapsto z_2^q$.
The isomorphic image $D^{(k)}_F$ of the operator $D^{(k)}$ defined as in \eqref{cDirac1}
has a local expression of the twined almost commutative form with vanishing second term:
\begin{equation}
\widetilde{{D}_F^{(k)}}=\slashed{D}^{(\check{k})} %\otimes
\mathbbm{1}_q\ ,
\end{equation}
where
\begin{equation}\label{Dkk}
\slashed{D}^{(\check{k})}=
i\sum_{j,\ell=1}^{2}
 \sigma^j\check{k}_j^\ell\,\partial_\ell\, \check{k}'^\ell_j + h.c.
\end{equation}
is the accordingly transformed canonical Dirac operator on the classical torus $\mathbb{T}^2$.
\end{prop}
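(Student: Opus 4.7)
The plan is to apply the isomorphism $Q:C^\infty(\mathbb{T}^2_{p/q})\to \Gamma^\infty(F)$ of Lemma \ref{ia1} to the operator $D^{(k)}$ and then pass to local components on $\mathbb{T}^2$ using \eqref{derFF}. The key observation that drives the argument is that central elements are \emph{seen} by the local-component map as scalar-matrix-valued functions, at which point the Pauli/spinor piece and the matrix piece decouple cleanly.

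First I would reduce to left multiplication. Since $k^\ell_j,k'^\ell_j\in\mathcal{Z}_{p/q}$, each of them commutes with every element of $C^\infty(\mathbb{T}^2_{p/q})$, so its action as an element of $JC^\infty(\mathbb{T}^2_{p/q})J^{-1}$ coincides with ordinary left multiplication on $L^2(\mathbb{T}^2_{p/q})$. Hence $D^{(k)}$ can be written as a sum of compositions of Pauli matrices, multiplication operators by elements of $\mathcal{Z}_{p/q}$, and the derivations $\delta_\ell$, and its conjugate by $Q\otimes\mathrm{id}$ is obtained by replacing $U,V$ with $\xi_U,\xi_V$ and $\delta_\ell$ with $\partial^F_\ell$. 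In particular, each central $k^\ell_j = \sum c_{mn} U^{qm}V^{qn}$ is sent to $\xi_{k^\ell_j}=\sum c_{mn}\xi_U^{qm}\xi_V^{qn}\in Z(\Gamma^\infty(F))$, and analogously for $k'^\ell_j$.

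Next I would identify the local components of central sections. A section $\xi\in\Gamma^\infty(F)$ with local components $\widetilde\xi(z_1,z_2)\in M_q$ is central iff $\widetilde\xi(z_1,z_2)$ commutes with every $R^mS^n$ for $(m,n)\in(\mathbb{Z}/q)^2$. Since the projective representation generated by $R$ and $S$ is irreducible on $\mathbb{C}^q$ (this is the standard Heisenberg-type argument: $R^mS^n$ span $M_q$ as a vector space), Schur's lemma forces $\widetilde\xi(z_1,z_2)=f(z_1,z_2)\,\mathbbm{1}_q$ for some $f\in C^\infty(\mathbb{T}^2)$, which is automatically $G$-invariant thanks to $\kappa$-$\rho$-equivariance of $\widetilde\xi$. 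Under the composed isomorphism $\mathcal{Z}_{p/q}\cong C^\infty(\check{\mathbb{T}}^2)\cong C^\infty(\mathbb{T}^2)^G$, this $f$ is precisely $\check{k}^\ell_j$ (and similarly $\check{k}'^\ell_j$), since on the generators the identifications agree: $\xi_U^q\mapsto z_1^q$ and $\xi_V^q\mapsto z_2^q$.

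Finally I would assemble the pieces. Using \eqref{derFF}, the local components of $\partial^F_\ell$ are the coordinate derivatives $\partial_\ell$; the Pauli matrices act on the spinor factor $\mathbb{C}^2$ independently of the fibre of $F$; and the central multiplication operators have local components $\check{k}^\ell_j\mathbbm{1}_q$ and $\check{k}'^\ell_j\mathbbm{1}_q$. Taking the local components of the conjugate of \eqref{cDirac1} thus yields
\begin{equation*}
\widetilde{D^{(k)}_F}
= i\sum_{j,\ell=1}^{2}\sigma^j\bigl(\check{k}^\ell_j\mathbbm{1}_q\bigr)\partial_\ell\bigl(\check{k}'^\ell_j\mathbbm{1}_q\bigr)+h.c.
=\Bigl(i\sum_{j,\ell=1}^{2}\sigma^j\check{k}^\ell_j\partial_\ell\check{k}'^\ell_j+h.c.\Bigr)\mathbbm{1}_q
=\slashed{D}^{(\check{k})}\mathbbm{1}_q,
\end{equation*}
where $\mathbbm{1}_q$ factors out because it is acted on trivially by both the Pauli matrices and the coordinate derivatives.

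The main obstacle is the Schur-lemma step in Step 3: one must be careful that the local components of a central section are really proportional to $\mathbbm{1}_q$ at every point, which relies on the irreducibility of the projective $G$-representation $\rho$. Once this fact is in hand, the rest is a bookkeeping exercise using the already-established dictionary between the standard and the first isomorphic spectral triple. The analogous statement for the second isomorphic spectral triple $\left(\mathcal{A}_{p/q},\mathcal{H}_{p/q},\mathcal{D}_{p/q}\right)$ is proved along the same lines, with central elements of $\mathcal{A}_{p/q}$ being $G$-invariant elements of $C^\infty(\mathbb{T}^2)\otimes\mathbbm{1}_q\subset C^\infty(\mathbb{T}^2)\otimes M_q$.
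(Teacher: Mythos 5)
Your proof is correct and follows essentially the same route as the paper's (much terser) argument: central elements have local components that are scalar multiples of $\mathbbm{1}_q$, identified with $\check{k}^\ell_j$ on the generators $\xi_U^q\mapsto z_1^q\mathbbm{1}_q$, $\xi_V^q\mapsto z_2^q\mathbbm{1}_q$, and then \eqref{derFF} finishes the computation; your Schur-lemma justification of the scalarity is a welcome extra detail the paper leaves implicit. One cosmetic imprecision: $JaJ^{-1}$ acts as right multiplication by $a^*$, so for central $a$ it is left multiplication by $a^*$ rather than by $a$ itself, but since the center is $*$-closed and the isomorphism is a $*$-map this does not affect the argument.
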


\begin{proof}
If ${k}_j^\ell$ and $k'^\ell_j$ are in the center of $C^{\infty}\left(\mathbb{T}^2_{p/q}\right),$ then their isomorphic images regarded as (scalar) multiplication operators on $L^2(F)\otimes \C^2$
have local expressions given by
$\check{k}_j^\ell$ and $\check{k}'^\ell_j$.
The statement then follows by \eqref{derFF}.
\end{proof}

This result can be seen as a consequence of the fact that, both for $\Gamma(F)$ and $\mathcal{A}_{p/q},$ the center of the algebra is isomorphic to $C^\infty(\mathbb{T}^2),$ thus a rescaling by an element in the center can only affect the part of Dirac operator which acts on functions of the classical torus.

\subsection*{Acknowledgements}
The second author was partially supported by
the grant H2020-MSCA-RISE-2015-691246-QUANTUM DYNAMICS.

%\newpage

\end{document}